\documentclass[11pt]{article}
\usepackage[margin=1.05in]{geometry}
\usepackage{amsmath,amssymb,amsthm,mathtools}
\usepackage{booktabs}
\usepackage{microtype}
\usepackage[hidelinks]{hyperref}

\numberwithin{equation}{section}

\newtheorem{theorem}{Theorem}[section]
\newtheorem{proposition}[theorem]{Proposition}
\newtheorem{lemma}[theorem]{Lemma}
\newtheorem{corollary}[theorem]{Corollary}
\theoremstyle{remark}
\newtheorem{remark}[theorem]{Remark}

\newcommand{\F}[2]{{}_{#1}F_{#2}}
\newcommand{\Ffourthree}{{}_4F_3}
\newcommand{\dd}{\,\mathrm d}
\newcommand{\eps}{\varepsilon}

\newcommand{\cC}{\mathcal C}
\newcommand{\cL}{\mathcal L}

\title{Coxeter Symmetry in Hypergeometric Functions and Elliptic Integral Moments}
\author{Dianbin Bao\\[1mm]
\small Department of Mathematics, Alvernia University, Reading, Pennsylvania 19607, USA\\
\small \texttt{dianbin.bao@alvernia.edu}}
\date{}

\begin{document}
\maketitle

\begin{abstract}
We develop a hypergeometric framework for elliptic integral moments organized
around Mishev's completed Saalsch\"utzian $L$-function and its $W(D_5)$
symmetry.  The Bailey--Mishev very-well-poised ${}_7F_6(1)$ representation
places complementary elliptic integral moments and harmonic hypergeometric
sums in a common parameter space.  We first give collision-based
hypergeometric derivations of the three complementary-modulus moment families
involving $K'^2$, $E'K'$, and $E'^2$.  The $E'^2$ identity is also recovered
independently from a Barnes-contiguous relation mirroring the differential
system of the elliptic integrals.  We then derive a hypergeometric evaluation
of the harmonic-series representation of the four-Bessel moment $s_{4,0}$.
At symmetric parameter points, Coxeter symmetry forces selected Taylor
coefficients to vanish along symmetry-adapted paths, producing families of
harmonic and special-value identities involving values of the Riemann zeta
function and the same Bessel period.  The resulting picture shows that these
elliptic-moment, harmonic-sum, and special-value identities arise from a
common $W(D_5)$ symmetry of generalized hypergeometric functions.
\end{abstract}

\noindent\textbf{Keywords.} generalized hypergeometric functions; elliptic integrals; Coxeter symmetry; Weyl groups; very-well-poised ${}_7F_6$ series; harmonic sums.

\section{Introduction}
The hypergeometric background of the present work has a classical origin.
Very-well-poised ${}_7F_6(1)$ transformations go back to Bailey
\cite{MR185155}.  Coxeter-group structures underlying transformations of
Saalsch\"utzian ${}_4F_3(1)$ series were developed by
Formichella, Green, and Stade \cite{MR2765604}.  Mishev subsequently
introduced a gamma-completed function $L$ which can be represented both as
a combination of two Saalsch\"utzian ${}_4F_3(1)$ series and as a
very-well-poised ${}_7F_6(1)$ series, and showed that its invariance group
is isomorphic to $W(D_5)$ \cite{MR2834915}.  This completed function is the
global hypergeometric object used throughout the present paper.

A second line of development concerns moments of complete elliptic
integrals, including complementary-modulus moments.  Wan obtained
systematic evaluations involving elliptic integrals of the first and
second kinds and their complementary counterparts, with many of the
resulting formulas taking very-well-poised hypergeometric form
\cite{MR2845511}.  Their arithmetic significance was developed further in
work relating elliptic-integral moments to critical modular $L$-values and
lattice sums \cite{MR3338042,MR3490555}.  McCrorie later studied related
special values and contiguous relations directly in the language of
very-well-poised ${}_7F_6(1)$ series, making explicit several
hypergeometric consequences of Wan's formulas \cite{MR4332182}.  These
results provide important special instances of the hypergeometric
structures considered here.

The present viewpoint differs in that the individual ${}_7F_6(1)$
representations are not taken as isolated starting points.  Instead, we
recover the complementary $K'^2$, $E'K'$, and $E'^2$ moment families from
Mishev's completed $L$-function and place them simultaneously in a common
$W(D_5)$-symmetric parameter space.  In this setting the hypergeometric
representations, reflection identities, collision limits, and contiguous
relations arise from the same global object.  In particular, previously
occurring very-well-poised specializations appear as individual points or
loci inside a larger symmetry framework.

A parallel source of identities comes from Bessel moments.  Bailey,
Borwein, Broadhurst, and Glasser studied elliptic-integral evaluations of
Bessel moments arising in mathematical physics and obtained, among many
other formulas, the harmonic-series representation associated with the
four-Bessel moment $s_{4,0}$ \cite{MR2450513}; see also
\cite{MR3573666} for later developments connecting Bessel moments,
modular forms, and $L$-series.  We show that the harmonic factor in the
BBBG series can itself be generated by differentiation in Mishev's
parameter space.  A $W(D_5)$ reflection and a symmetric collision then
give a purely hypergeometric route to its evaluation.

The Bessel calculation then suggests a local use of the group action.  At a
symmetric point fixed by a finite-order Weyl element, an eigenvector in the
tangent representation determines a symmetry-adapted parameter path.
Invariance forces Taylor coefficients in incompatible degrees to vanish.
Because parameter differentiation of Pochhammer symbols produces harmonic
numbers and polygamma values, these selection rules become arithmetic
identities.  We develop this mechanism at the symmetric Bessel point and
record its higher-order consequences.

Accordingly, the paper has two complementary levels.  Globally, $W(D_5)$
reflections, collisions, and contiguous shifts organize the elliptic and
Bessel hypergeometric formulas.  Locally, finite-order Coxeter symmetry at a
symmetric point constrains parameter derivatives and produces harmonic
identities.  The common organizing object throughout is Mishev's completed
$L$-function.

The paper is organized as follows.
Section~\ref{sec:mishev} recalls Mishev's completed $L$-function,
its $W(D_5)$ invariance, and the Bailey--Mishev and Barnes
representations used throughout.
Section~\ref{sec:complementary-moments} develops the three complementary
elliptic-moment families and their contiguous-square structure.
Section~\ref{sec:bbbg} treats the BBBG four-Bessel moment.
Section~\ref{sec:eigenpath} develops the Coxeter eigenpath principle,
the $W(D_5)$-fixed deformation of the BBBG point, and the first
arithmetic consequences.
Section~\ref{sec:higher-order} derives the higher-order harmonic
identities.
Finally, Section~\ref{sec:conclusion} summarizes the global and local
roles of $W(D_5)$ and the structural relations among the preceding
constructions.

\section{Mishev's completed $L$-function and $W(D_5)$ symmetry}
\label{sec:mishev}

We first fix the hypergeometric notation used throughout the paper.  For
complex parameters $a_1,\ldots,a_p$ and $b_1,\ldots,b_q$, with no lower
parameter a nonpositive integer, write
\begin{equation}
	{}_pF_q\!\left(
	\begin{matrix}
		a_1,\ldots,a_p\\
		b_1,\ldots,b_q
	\end{matrix};z
	\right)
	:=
	\sum_{n=0}^{\infty}
	\frac{(a_1)_n\cdots(a_p)_n}
	{(b_1)_n\cdots(b_q)_n}
	\frac{z^n}{n!},
\end{equation}
where $(a)_0:=1$ and
\begin{equation}
	(a)_n:=a(a+1)\cdots(a+n-1)
	=\frac{\Gamma(a+n)}{\Gamma(a)},
	\qquad n\ge1,
\end{equation}
whenever the gamma quotient is defined. We use these series in their domains of convergence and, when necessary, their analytic or meromorphic continuations in the parameters.

Following Mishev \cite{MR2834915}, write
\begin{equation}
	p=(a,b,c,d;e;f,g)
\end{equation}
for a point on the Saalsch\"utzian hyperplane
\begin{equation}
	e+f+g-a-b-c-d=1.
	\label{eq:saalschutz}
\end{equation}
For generic parameters satisfying \eqref{eq:saalschutz}, define the
gamma-completed function
\begin{align}
	L(a,b,c,d;e;f,g)
	={}&
	\frac{{}_4F_3\!\left(
		\begin{matrix}
			a,b,c,d\\
			e,f,g
		\end{matrix};1\right)}
	{\sin(\pi e)\Gamma(e)\Gamma(f)\Gamma(g)
		\prod_{\xi\in\{a,b,c,d\}}\Gamma(1+\xi-e)}
	\notag\\
	&-
	\frac{{}_4F_3\!\left(
		\begin{matrix}
			1+a-e,1+b-e,1+c-e,1+d-e\\
			1+f-e,1+g-e,2-e
		\end{matrix};1\right)}
	{\sin(\pi e)\Gamma(a)\Gamma(b)\Gamma(c)\Gamma(d)
		\Gamma(1+f-e)\Gamma(1+g-e)\Gamma(2-e)}.
	\label{eq:mishev-L}
\end{align}
We use the meromorphic continuation of \eqref{eq:mishev-L} at parameter
values for which the individual terms in this expression are singular.

A fundamental symmetry is the affine involution
\begin{equation}
	\mathcal A(a,b,c,d;e;f,g)
	=
	\left(
	a,b,g-c,g-d;
	1+a+b-f;
	1+a+b-e,g
	\right).
	\label{eq:mishev-involution}
\end{equation}
Mishev's two-term relation is
\begin{equation}
	L(p)=L(\mathcal A p).
	\label{eq:mishev-reflection}
	\end{equation}
Together with permutations of $a,b,c,d$ and the interchange
$f\leftrightarrow g$, these transformations generate an invariance group
isomorphic to the Weyl group $W(D_5)$
\cite{MR2765604,MR2834915}.  This group action is the global symmetry
underlying the reflection and collision arguments used below.

Set
\begin{equation}
	\alpha=d+g-e.
\end{equation}
The Bailey--Mishev representation expresses the same completed function
as
\begin{align}
	L(a,b,c,d;e;f,g)
	={}&
	\frac{\Gamma(1+d+g-e)}
	{\pi\Gamma(g)\Gamma(1+g-e)\Gamma(f-d)
		\Gamma(1+a+d-e)\Gamma(1+b+d-e)\Gamma(1+c+d-e)}
	\notag\\
	&\times
	{}_7F_6\!\left(
	\begin{matrix}
		\alpha,1+\frac{\alpha}{2},
		g-a,g-b,g-c,d,1+d-e\\
		\frac{\alpha}{2},
		1+a+d-e,1+b+d-e,1+c+d-e,1+g-e,g
	\end{matrix};1
	\right).
	\label{eq:bailey-mishev}
\end{align}
The ${}_7F_6(1)$ in \eqref{eq:bailey-mishev} is very well-poised in the
classical sense.  We shall use only the explicit Bailey--Mishev form above,
rather than the general theory of very-well-poised series.  This
representation lies in the classical hypergeometric tradition going back
to Bailey \cite{MR185155}; its role here is that it converts the
$W(D_5)$ action on the completed $L$-function into explicit transformations
of ${}_7F_6(1)$ series.

For the contiguous arguments we shall also use Mishev's Barnes
representation
\begin{align}
	L(a,b,c,d;e;f,g)
	={}&
	\frac{1}
	{\pi\Gamma(a)\Gamma(b)\Gamma(c)\Gamma(d)
		\Gamma(1+a-e)\Gamma(1+b-e)
		\Gamma(1+c-e)\Gamma(1+d-e)}
	\notag\\
	&\times
	\frac{1}{2\pi i}
	\int_{\mathfrak C}
	\frac{
		\Gamma(a+t)\Gamma(b+t)\Gamma(c+t)\Gamma(d+t)
		\Gamma(1-e-t)\Gamma(-t)}
	{\Gamma(f+t)\Gamma(g+t)}
	\,\dd t ,
	\label{eq:mishev-barnes}
\end{align}
where $\mathfrak C$ is a Barnes contour separating the appropriate left and
right sequences of poles \cite{MR2834915}.

The two representations will be used for different purposes.
The Bailey--Mishev formula \eqref{eq:bailey-mishev} is particularly
effective for Weyl-group reflections, parameter cancellations, and the
special ${}_7F_6(1)$ evaluations arising from elliptic and Bessel moments,
whereas the Barnes representation \eqref{eq:mishev-barnes} is naturally
adapted to contiguous shifts and telescoping arguments.

\section{Complementary elliptic-integral moments}
\label{sec:complementary-moments}

We use the standard complete elliptic integrals
\begin{equation}
	K(k):=\int_0^{\pi/2}\frac{d\theta}
	{\sqrt{1-k^2\sin^2\theta}},
	\qquad
	E(k):=\int_0^{\pi/2}\sqrt{1-k^2\sin^2\theta}\,d\theta,
\end{equation}
and their complementary-modulus counterparts
\begin{equation}
	K'(x):=K\!\left(\sqrt{1-x^2}\right),
	\qquad
	E'(x):=E\!\left(\sqrt{1-x^2}\right).
\end{equation}
For $\Re s>0$,
\begin{equation}
\begin{aligned}
M_{00}(s)&:=\int_0^1x^{2s-1}K'(x)^2\,\dd x,\\
M_{10}(s)=M_{01}(s)&:=\int_0^1x^{2s-1}E'(x)K'(x)\,\dd x,\\
M_{11}(s)&:=\int_0^1x^{2s-1}E'(x)^2\,\dd x.
\end{aligned}
\label{eq:moment-family}
\end{equation}
For $\varepsilon,\delta\in\{0,1\}$ define
\begin{equation}
Q_{\varepsilon,\delta}(s)
:=\left(s+\varepsilon+\delta,s,\frac12,\frac12;1;
 s+\frac12+\varepsilon,s+\frac12+\delta\right).
\label{eq:Q-family}
\end{equation}
The four points $Q_{00},Q_{10},Q_{01},Q_{11}$ will organize the three
moment families below.

\subsection{The $K'^2$ family}
\label{subsec:wan-k2}

We begin with the complementary quadratic moment $M_{00}(s)$ defined in
\eqref{eq:moment-family}.  Wan \cite{MR2845511} obtained a hypergeometric
evaluation of this family.  Written in our Mellin parameter $s$, it takes
the form stated below. Our aim is to recover the same formula from Mishev's
completed $L$-function. The main step is to identify $M_{00}(s)$ with
the specialization $L(Q_{00}(s))$; a $W(D_5)$ reflection then transforms
this specialization into the ${}_7F_6(1)$ representation appearing in
Wan's evaluation.

\begin{theorem}[Wan's $K'^2$ evaluation]
	\label{thm:wan-k2-mishev}
	For $\Re s>0$,
	\begin{equation}
		M_{00}(s)
		=
		\frac{\pi^2s\Gamma(s)^4}
		{8\Gamma(s+\frac12)^4}
		\F76\!\left(
		\begin{matrix}
			s,\ 1+\frac{s}{2},\ s,\
			\frac12,\frac12,\frac12,\frac12\\
			\frac{s}{2},\
			s+\frac12,s+\frac12,s+\frac12,s+\frac12,\ 1
		\end{matrix};1
		\right).
	\end{equation}
\end{theorem}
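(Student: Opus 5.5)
The plan is to realize $M_{00}(s)$ as a constant multiple of $L(Q_{00}(s))$ through the Barnes representation \eqref{eq:mishev-barnes}. Then one application of the involution $\mathcal A$ and the Bailey--Mishev formula \eqref{eq:bailey-mishev} should produce the stated ${}_7F_6(1)$.

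Note first that $Q_{00}(s)=(s,s,\tfrac12,\tfrac12;1;s+\tfrac12,s+\tfrac12)$ lies on the Saalsch\"utzian hyperplane \eqref{eq:saalschutz}. Here $e=1$ is a degenerate value for \eqref{eq:mishev-L}, where $\sin(\pi e)$ vanishes. I will therefore avoid the two-term ${}_4F_3$ form entirely and work with the Barnes integral, which is regular at $e=1$. The double gamma factors that appear there account for the logarithmic behaviour of $K'$.

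\textbf{Step 1 (Barnes integral for the moment).} Write
\[
K'(x)=\tfrac{\pi}{2}\,{}_2F_1\!\left(\tfrac12,\tfrac12;1;1-x^2\right)
\]
and use its Mellin--Barnes representation in the variable $x^2$:
\[
K'(x)=\frac{1}{4\pi i}\int\frac{\Gamma(-t)^2\,\Gamma(\tfrac12+t)^2}{\Gamma(\tfrac12)^2}\,x^{2t}\,dt \quad(\text{up to the normalisation to be checked}).
\]
This is equivalent to the known Mellin transform
\[
\int_0^1 x^{2s-1}K'(x)\,\dd x=\frac{\pi\,\Gamma(s)^2}{4\,\Gamma(s+\frac12)^2}.
\]
Substitute this representation for one factor of $K'$ in $M_{00}(s)$. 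Interchange the integrals, which is justified for $\Re s>0$ with a contour placed so that $\Re(s+t)>0$. The inner integral becomes the Mellin transform above at $s+t$. The result should be
\[
M_{00}(s)=C\cdot\frac{1}{2\pi i}\int\frac{\Gamma(s+t)^2\,\Gamma(\tfrac12+t)^2\,\Gamma(-t)^2}{\Gamma(s+\tfrac12+t)^2}\,\dd t,
\]
with an explicit constant $C$ that is a power of $\pi$ times a rational number. This integrand is exactly the one in \eqref{eq:mishev-barnes} with $a=b=s$, $c=d=\tfrac12$, $e=1$, $f=g=s+\tfrac12$, since then $\Gamma(1-e-t)=\Gamma(-t)$. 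Comparing prefactors gives
\[
M_{00}(s)=C'\,\Gamma(s)^2\pi^2\cdot L(Q_{00}(s))
\]
for an explicit constant $C'$. The contour separation required in \eqref{eq:mishev-barnes} is compatible because the poles of $\Gamma(-t)^2$ lie to the right and those of $\Gamma(s+t)^2\Gamma(\frac12+t)^2$ lie to the left.

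\textbf{Step 2 (reflection).} Apply \eqref{eq:mishev-involution}:
\[
\mathcal A Q_{00}(s)=\left(s,s,s,s;\ s+\tfrac12;\ 2s,\ s+\tfrac12\right),
\]
and \eqref{eq:mishev-reflection} gives $L(Q_{00})=L(\mathcal AQ_{00})$.

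\textbf{Step 3 (Bailey--Mishev).} Evaluate $L(\mathcal AQ_{00})$ by \eqref{eq:bailey-mishev} with $d=s$ and $g=e=s+\tfrac12$, so that $\alpha=s$. Then:
\begin{itemize}
\item the upper parameters are $s,\,1+\tfrac s2$, three copies of $\tfrac12$, then $s$ and $1+d-e=\tfrac12$;
\item the lower parameters are $\tfrac s2$, three copies of $s+\tfrac12$, then $1+g-e=1$ and $g=s+\tfrac12$.
\end{itemize}
This is precisely the ${}_7F_6(1)$ in the theorem. Its prefactor is
\[
\frac{\Gamma(1+s)}{\pi\,\Gamma(s+\frac12)^4\,\Gamma(s)}=\frac{s}{\pi\,\Gamma(s+\frac12)^4}.
\]
Multiplying by the constant from Step~1 should give $\pi^2 s\,\Gamma(s)^4/(8\Gamma(s+\frac12)^4)$. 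For this I only need to track the $\Gamma(s)^2$ factors from $\Gamma(a)\Gamma(b)$ and the $\Gamma(\tfrac12)$ factors in \eqref{eq:mishev-barnes}. As a check I will verify the final constant against a limiting value, for example the leading behaviour as $s\to0^+$ or a known value at $s=1$.

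The main obstacle is Step~1. The Mellin--Barnes representation of $K'^2$ has to be set up with the correct constant. The contour must be legitimate: it must separate the double poles at $t=0,1,\dots$ from those at $t=-\tfrac12-n$ and $t=-s-n$. The interchange of integrals must also be justified by Stirling-type decay of the integrand, which is of order $|t|^{-1}$ times an exponential factor and so needs care. If direct interchange proves awkward, I would instead establish the identity for real $s$ large, using absolute convergence, and extend it to $\Re s>0$ by analytic continuation, since both sides are analytic there.
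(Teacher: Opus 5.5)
Your proof is correct. Steps~2 and~3 are exactly the paper's: the involution sends $Q_{00}(s)$ to $\mathfrak p_s=(s,s,s,s;s+\frac12;2s,s+\frac12)$, and the Bailey--Mishev formula there gives $\frac{s}{\pi\Gamma(s+\frac12)^4}$ times the stated ${}_7F_6(1)$.

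The difference lies in how $M_{00}(s)$ is identified with $L(Q_{00}(s))$. The paper substitutes Cayley's logarithmic expansion of $K'$. It writes the harmonic bracket $B_n(s)$ as the $t$-derivative of the balanced ${}_4F_3(1)$ family $\Phi_s(t)$ along $\mathfrak q_s(t)$, plus the constant $\beta_K(s)$. It then takes the $e=1$ collision limit of the two-term definition \eqref{eq:mishev-L}, where $\beta_K(s)$ is matched exactly by $-\mathcal D_s'(0)/\mathcal D_s(0)$ from the gamma completion.

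You instead use the Barnes integral for ${}_2F_1(\frac12,\frac12;1;1-y)$ in powers of $y=x^2$. Your normalisation is right: it is DLMF~15.6.7, and the double pole at $t=0$ reproduces the leading term $\log(4/x)$ of Cayley's expansion. After Fubini you get
\[
M_{00}(s)=\frac18\cdot\frac{1}{2\pi i}\int_{\mathfrak C}\mathcal K_s(t)\,\dd t ,
\]
which is precisely the kernel \eqref{eq:wan-barnes-kernel} that the paper only introduces later, in \eqref{eq:wan-barnes-Lambda00}.

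Your route is shorter and analytically cleaner. On a line $\Re t=c$ with $-\min(\Re s,\frac12)<c<0$, the Barnes factor decays like $e^{-2\pi|\Im t|}$ and $x^{2\Re s+2c-1}K'(x)$ is integrable on $(0,1)$. Tonelli therefore justifies the interchange at once, and your analytic-continuation fallback is unnecessary. The same computation with $J_E$ in place of $J_K$ also yields $M_{10}$.

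What the paper's route buys is the mechanism it wants to display: the $n$-independent constant $\beta_K(s)$ of Cayley's expansion is absorbed by Mishev's gamma completion. It also gives a template parallel to the paper's $E'K'$ and $E'^2$ collision proofs.

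One caution: do not check the Barnes kernel by Mellin inversion of $J_K$. That function is the transform of $K'\mathbf 1_{(0,1)}$, and its inverse kernel differs from yours by a factor $\cos^2\pi t$.

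There is one bookkeeping slip. At $Q_{00}(s)$ the prefactor in \eqref{eq:mishev-barnes} is $1/(\pi^3\Gamma(s)^4)$. Because $e=1$, the factors $\Gamma(1+a-e)\Gamma(1+b-e)$ also equal $\Gamma(s)^2$, and $\Gamma(c)\Gamma(d)\Gamma(1+c-e)\Gamma(1+d-e)=\pi^2$. With your constant $C=\frac18$ this gives $M_{00}(s)=\frac{\pi^3}{8}\Gamma(s)^4L(Q_{00}(s))$, which is the paper's \eqref{eq:wan-k2-completed-L}, not $C'\,\Gamma(s)^2\pi^2L(Q_{00}(s))$. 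If you tracked only $\Gamma(a)\Gamma(b)$, as you propose in Step~3, you would end with the wrong power of $\Gamma(s)$ in the final formula.
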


\begin{proof}
Set
\begin{equation}
	\gamma_n:=\left(\frac{(1/2)_n}{n!}\right)^2,
	\qquad
	H_n:=\sum_{k=1}^n\frac1k,
	\qquad
	d_n:=2(H_{2n}-H_n),
\end{equation}
with $H_0:=0$.
Cayley's complementary-modulus expansion,  written on p.~54 in his
notation for $F_1k$, becomes in the present
notation \cite[p.~54]{MR124532}
\begin{equation}
	K'(x)
	=
	\sum_{n=0}^{\infty}
	\gamma_n x^{2n}
	\left(
	\log\frac4x-d_n
	\right),
	\qquad 0<x<1.
	\label{eq:cayley-Kprime}
\end{equation}
Introduce the one-fold Mellin transform
	\begin{equation}
		J_K(u):=\int_0^1x^{2u-1}K'(x)\,\dd x.
	\end{equation}
	For $\Re u>0$, the beta integral followed by Gauss summation gives
\begin{equation}
		J_K(u)=\frac{\pi}{4}\frac{\Gamma(u)^2}{\Gamma(u+\frac12)^2},
		\label{eq:JK}
\end{equation}
a formula also recorded by Wan \cite[p.~126]{MR2845511}.
Consequently,
	\begin{equation}
		\frac{J_K'(u)}{J_K(u)}
		=
		2\psi(u)-2\psi\!\left(u+\frac12\right),
	\end{equation}
	where $\psi(u)=\frac{\Gamma'(u)}{\Gamma(u)}$ is the digamma function. Differentiating the defining integral for $J_K(u)$ with respect to $u$ gives
	\begin{equation}
		\frac12 J_K'(u)
		=
		\int_0^1 x^{2u-1}\log x\,K'(x)\,\dd x.
	\end{equation}
	Hence
	\begin{equation}
		\int_0^1 x^{2u-1}\log\frac4x\,K'(x)\,\dd x
		=
		\log4\,J_K(u)-\frac12J_K'(u).
		\label{eq:JK-log-moment}
	\end{equation}
	Substituting Cayley's expansion \eqref{eq:cayley-Kprime} for one copy
	of $K'(x)$ in $M_{00}(s)$ and using \eqref{eq:JK-log-moment} therefore gives
	\begin{equation}
	M_{00}(s)
	=
	J_K(s)\sum_{n=0}^{\infty}\phi_n(s)B_n(s),
	\end{equation}
	where
	\begin{equation}
	\phi_n(s)
	=
	\frac{(s)_n^2(1/2)_n^2}
	{(s+\frac12)_n^2(n!)^2},
	\end{equation}
	and
	\begin{equation}
	B_n(s)
	=
	\log4-\psi(s+n)
	+\psi\!\left(s+n+\frac12\right)
	-2(H_{2n}-H_n).
	\end{equation}
	Write
	\begin{equation}
	D_a(n):=\psi(a+n)-\psi(a).\label{eq:digamma-increment}	
	\end{equation}
	Since
	\begin{equation}
	2(H_{2n}-H_n)=D_{1/2}(n)-D_1(n),
	\end{equation}
	we obtain
	\begin{equation}
	B_n(s)
	=
	\beta_K(s)
	+D_{s+1/2}(n)-D_s(n)
	+D_1(n)-D_{1/2}(n),
	\end{equation}
	where
	\begin{equation}
		\beta_K(s)
		:=
		\log4-\frac12\frac{J_K'(s)}{J_K(s)}
		=
		\log4-\psi(s)+\psi\!\left(s+\frac12\right).
	\end{equation}
	The four digamma differences are naturally realized by a tangent
	direction on the Saalsch\"utzian hyperplane.  Define the balanced path
	\begin{equation}
	\mathfrak q_s(t)
	=
	\left(
	s-t,s,\frac12-t,\frac12;
	1-t;
	s+\frac12-t,s+\frac12
	\right)\label{eq:wan-q-path}	
	\end{equation}
	and
	\begin{equation}
	\Phi_s(t)
	:=
	\F43\!\left(
	\begin{matrix}
		s-t,\ s,\ \frac12-t,\ \frac12\\
		1-t,\ s+\frac12-t,\ s+\frac12
	\end{matrix};1
	\right).
	\end{equation}
If $\phi_n(s;t)$ denotes the $n$-th summand of $\Phi_s(t)$,
then $\phi_n(s;0)=\phi_n(s)$, and logarithmic differentiation gives
\begin{equation}
	\left.
	\frac{\dd}{\dd t}\log \phi_n(s;t)
	\right|_{t=0}
	=
	D_{s+1/2}(n)-D_s(n)
	+D_1(n)-D_{1/2}(n).
\end{equation}
Hence
\begin{equation}
	B_n(s)
	=
	\beta_K(s)
	+
	\left.
	\frac{\dd}{\dd t}\log \phi_n(s;t)
	\right|_{t=0}.
\end{equation}
Multiplying by $\phi_n(s)=\phi_n(s;0)$ gives
\begin{equation}
	\phi_n(s)B_n(s)
	=
	\beta_K(s)\phi_n(s)
	+
	\left.
	\frac{\dd}{\dd t}\phi_n(s;t)
	\right|_{t=0}.
\end{equation}
After summing over $n$, we arrive at
\begin{equation}
	M_{00}(s)
	=
	J_K(s)
	\left[
	\Phi_s'(0)+\beta_K(s)\Phi_s(0)
	\right].
	\label{eq:wan-k2-parameter-derivative}
\end{equation}

We now pass from the bare hypergeometric series to Mishev's completed
$L$-function.  Along the path $\mathfrak q_s(t)$, set
\begin{equation}
	\mathcal D_s(t)
	=
	\Gamma(1-t)
	\Gamma\!\left(s+\frac12-t\right)
	\Gamma\!\left(s+\frac12\right)
	\Gamma(s)
	\Gamma(s+t)
	\Gamma\!\left(\frac12\right)
	\Gamma\!\left(\frac12+t\right).
\end{equation}
A direct substitution into the two branches of Mishev's completed
$L$-function gives
\begin{equation}
	L(\mathfrak q_s(t))
	=
	\frac{1}{\sin(\pi t)}
	\left[
	\frac{\Phi_s(t)}{\mathcal D_s(t)}
	-
	\frac{\Phi_s(-t)}{\mathcal D_s(-t)}
	\right].
\end{equation}
This suggests introducing the completed hypergeometric branch
\begin{equation}
	\widehat{\Phi}_s(t)
	:=
	\frac{\Phi_s(t)}{\mathcal D_s(t)}.
\end{equation}
Thus
\begin{equation}
	L(\mathfrak q_s(t))
	=
	\frac{
		\widehat{\Phi}_s(t)-\widehat{\Phi}_s(-t)
	}{
		\sin(\pi t)
	}.
\end{equation}
Since $\mathfrak q_s(0)=Q_{00}(s)$, letting $t\to0$ yields
\begin{equation}
	L(Q_{00}(s))
	=
	\lim_{t\to0}
	\frac{
		\widehat{\Phi}_s(t)-\widehat{\Phi}_s(-t)
	}{
		\sin(\pi t)
	}
	=
	\frac{2}{\pi}\widehat{\Phi}_s'(0).
\end{equation}

At $t=0$,
\begin{equation}
	\mathcal D_s(0)
	=
	\pi\Gamma(s)^2
	\Gamma\!\left(s+\frac12\right)^2,
\end{equation}
while logarithmic differentiation gives
\begin{equation}
	-\frac{\mathcal D_s'(0)}{\mathcal D_s(0)}
	=
	\psi(1)
	+\psi\!\left(s+\frac12\right)
	-\psi(s)
	-\psi\!\left(\frac12\right).
\end{equation}
Using
\begin{equation}
	\psi(1)-\psi\!\left(\frac12\right)
	=
	2\log2=\log4,
\end{equation}
we obtain
\begin{equation}
	-\frac{\mathcal D_s'(0)}{\mathcal D_s(0)}
	=
	\beta_K(s).
\end{equation}
Consequently,
\begin{equation}
	\widehat{\Phi}_s'(0)
	=
	\frac{1}{\mathcal D_s(0)}
	\left[
	\Phi_s'(0)+\beta_K(s)\Phi_s(0)
	\right],
\end{equation}
and hence
\begin{equation}
	L(Q_{00}(s))
	=
	\frac{2}{\pi\mathcal D_s(0)}
	\left[
	\Phi_s'(0)+\beta_K(s)\Phi_s(0)
	\right].
\end{equation}	

Comparison with \eqref{eq:wan-k2-parameter-derivative}, together with
the evaluation of $J_K(s)$, yields
	\begin{equation}
	M_{00}(s)
	=
	\frac{\pi^3}{8}\Gamma(s)^4
	L(Q_{00}(s)).
	\label{eq:wan-k2-completed-L}
	\end{equation}
	This identity is the crucial completion step.  The term
	$\beta_K(s)$ arising from the complementary-modulus expansion is
	matched exactly by the logarithmic derivative of the gamma
	completion in Mishev's $L$-function.
	
	It remains to use the \(W(D_5)\) symmetry.  Mishev's fundamental
	reflection
	\begin{equation}
	L(a,b,c,d;e;f,g)
	=
	L(a,b,g-c,g-d;
	1+a+b-f;
	1+a+b-e,g)
	\end{equation}
	sends
	\begin{equation}
	Q_{00}(s)
	=
	\left(
	s,s,\frac12,\frac12;
	1;
	s+\frac12,s+\frac12
	\right)
	\end{equation}
	to
	\begin{equation}
	\mathfrak{p}_s
	=
	\left(
	s,s,s,s;
	s+\frac12;
	2s,s+\frac12
	\right).\label{eq:wan-p-point}
	\end{equation}
	Hence
	\begin{equation}
	L(Q_{00}(s))=L(\mathfrak{p}_s).
	\end{equation}
	At the reflected point $\mathfrak{p}_s$, the Bailey--Mishev
	very-well-poised representation has parameter
	\begin{equation}
	\alpha=d+g-e=s
	\end{equation}
	and gives
	\begin{equation}
	L(\mathfrak{p}_s)
	=
	\frac{s}{\pi\Gamma(s+\frac12)^4}
	\F76\!\left(
	\begin{matrix}
		s,\ 1+\frac{s}{2},\ s,\
		\frac12,\frac12,\frac12,\frac12\\[1mm]
		\frac{s}{2},\
		s+\frac12,s+\frac12,s+\frac12,s+\frac12,\ 1
	\end{matrix};1
	\right).
	\end{equation}
Substituting this expression into
\eqref{eq:wan-k2-completed-L} proves the theorem.
\end{proof}

\begin{remark}[Why the completion is essential]
	The preceding derivation explains a feature that is invisible if Wan's
	very-well-poised series is viewed merely as a closed formula.  The
	complementary-modulus expansion separates naturally into a
	parameter-dependent harmonic part and the $n$-independent contribution
	\begin{equation}
		\beta_K(s)
		=
		\log4-\psi(s)+\psi\!\left(s+\frac12\right).
	\end{equation}
	The harmonic part is realized by the tangent derivative of the balanced
	${}_4F_3(1)$ family $\Phi_s(t)$, while the $n$-independent contribution
	satisfies
	\begin{equation}
		\beta_K(s)
		=
		-\frac{\mathcal D_s'(0)}{\mathcal D_s(0)}.
	\end{equation}
	Thus the two pieces combine precisely under the gamma completion in
	Mishev's $L$-function.
\end{remark}

\begin{remark}[Wan's original parametrization]
	To compare with Wan's original form, put $m=2s-1$.  Applying the
	duplication formula for the gamma function to
	Theorem~\ref{thm:wan-k2-mishev} gives
	\begin{equation}
		\int_0^1 x^m K'(x)^2\,\dd x
		=
		\frac{2^{4m}(m+1)}{16}
		\frac{\Gamma\!\left(\frac{m+1}{2}\right)^8}
		{\Gamma(m+1)^4}
		{}_7F_6\!\left(
		\begin{matrix}
			\frac{m+1}{2},\ \frac{m+5}{4},\ \frac{m+1}{2},
			\ \frac12,\ \frac12,\ \frac12,\ \frac12\\
			\frac{m+1}{4},\ \frac{m+2}{2},\ \frac{m+2}{2},
			\ \frac{m+2}{2},\ \frac{m+2}{2},\ 1
		\end{matrix}
		;1
		\right),
	\end{equation}
	which is Wan's original parametrization \cite[Proposition~1(3), equation~(18), p.~125]{MR2845511}.
\end{remark}

\subsection{The $E'K'$ family}
\label{subsec:wan-ek}

We next consider the mixed complementary moment $M_{10}(s)$ defined in
\eqref{eq:moment-family}.  We first derive its representation in terms of
Mishev's completed $L$-function by the same complementary-modulus
collision mechanism used for the $K'^2$ family.

\subsubsection{The one-fold Mellin transform of $E'$}

Define
\begin{equation}
	J_E(u)
	:=
	\int_0^1 x^{2u-1}E'(x)\,\dd x.
\end{equation}
For $\Re u>0$, the beta integral followed by Gauss summation gives
\begin{equation}
	J_E(u)
	=
	\frac{\pi}{4}
	\frac{\Gamma(u)\Gamma(u+1)}
	{\Gamma(u+\frac12)\Gamma(u+\frac32)},
	\label{eq:JE}
\end{equation}
a formula also recorded by Wan \cite[p.~126]{MR2845511}.

Comparing \eqref{eq:JE} with \eqref{eq:JK}, we obtain
\begin{equation}
	J_E(u)
	=
	\frac{u}{u+\frac12}J_K(u).
	\label{eq:JE-JK}
\end{equation}
Thus the passage from $K'$ to $E'$ introduces a single contiguous
factor $u/(u+\frac12)$.

Differentiating the defining integral for $J_E(u)$ gives
\begin{equation}
	\frac12 J_E'(u)
	=
	\int_0^1 x^{2u-1}\log x\,E'(x)\,\dd x.
\end{equation}
Consequently,
\begin{equation}
	\int_0^1 x^{2u-1}\log\frac4x\,E'(x)\,\dd x
	=
	\log4\,J_E(u)-\frac12J_E'(u).
	\label{eq:JE-log-moment}
\end{equation}
\subsubsection{Cayley's expansion and the mixed moment}

Using Cayley's expansion \eqref{eq:cayley-Kprime} for $K'(x)$ together
with the logarithmic Mellin identity \eqref{eq:JE-log-moment}, we obtain
\begin{equation}
	M_{10}(s)
	=
	\sum_{n=0}^{\infty}
	\gamma_n J_E(s+n)\,C_n(s),
\end{equation}
where
\begin{equation}
	C_n(s)
	:=
	\log4
	-\frac12\frac{J_E'(s+n)}{J_E(s+n)}
	-2(H_{2n}-H_n).
\end{equation}

The shifted Mellin transform satisfies
\begin{equation}
	\frac{J_E(s+n)}{J_E(s)}
	=
	\frac{(s)_n(s+1)_n}
	{(s+\frac12)_n(s+\frac32)_n}.
	\label{eq:JE-shift}
\end{equation}
Accordingly, set
\begin{equation}
	\theta_n(s)
	:=
	\frac{\gamma_nJ_E(s+n)}{J_E(s)}
	=
	\frac{
		(s+1)_n(s)_n(\frac12)_n^2
	}{
		(s+\frac32)_n(s+\frac12)_n(n!)^2
	}.
\end{equation}
Thus $\theta_n(s)$ is the $n$-th coefficient of
\begin{equation}
	\Theta_s(0)
	:=
	{}_4F_3\!\left(
	\begin{matrix}
		s+1,\ s,\ \frac12,\ \frac12\\
		1,\ s+\frac32,\ s+\frac12
	\end{matrix};1
	\right),
\end{equation}
and
\begin{equation}
	M_{10}(s)
	=
	J_E(s)
	\sum_{n=0}^{\infty}
	\theta_n(s)C_n(s).
	\label{eq:EK-moment-series}
\end{equation}

Using the digamma increment $D_a(n)$ introduced above, we have
\begin{equation}
	D_1(n)-D_{1/2}(n)
	=
	-2(H_{2n}-H_n),
\end{equation}
while logarithmic differentiation of \eqref{eq:JE} gives
\begin{equation}
	\frac{J_E'(u)}{J_E(u)}
	=
	\psi(u)+\psi(u+1)
	-\psi\!\left(u+\frac12\right)
	-\psi\!\left(u+\frac32\right).
\end{equation}
It follows that
\begin{equation}
	C_n(s)
	=
	\beta_E(s)
	+D_1(n)-D_{1/2}(n)
	+\frac12
	\left[
	D_{s+\frac12}(n)+D_{s+\frac32}(n)
	-D_s(n)-D_{s+1}(n)
	\right],
	\label{eq:EK-Cn-decomposition}
\end{equation}
where
\begin{equation}
	\beta_E(s)
	:=
	\log4
	-\frac12\frac{J_E'(s)}{J_E(s)}
	=
	\log4
	-\frac12
	\left[
	\psi(s)+\psi(s+1)
	-\psi\!\left(s+\frac12\right)
	-\psi\!\left(s+\frac32\right)
	\right].
	\label{eq:beta-E}
\end{equation}

\subsubsection{The balanced collision path}

Consider the balanced path
\begin{equation}
	\mathfrak r_s(t)
	:=
	\left(
	s+1-\frac t2,\,
	s-\frac t2,\,
	\frac12-\frac t2,\,
	\frac12-\frac t2;
	1-t;\,
	s+\frac32-\frac t2,\,
	s+\frac12-\frac t2
	\right), \label{eq:EK-balanced-path}
	\end{equation}
so that
\[
\mathfrak r_s(0)=Q_{10}(s).
\]
The Saalsch\"utz condition is preserved identically in $t$.

Define
\begin{equation}
	\Theta_s(t)
	:=
	{}_4F_3\!\left(
	\begin{matrix}
		s+1-\frac t2,\,
		s-\frac t2,\,
		\frac12-\frac t2,\,
		\frac12-\frac t2\\
		1-t,\,
		s+\frac32-\frac t2,\,
		s+\frac12-\frac t2
	\end{matrix};1
	\right).
\end{equation}

If $\theta_n(s;t)$ denotes the $n$-th summand of $\Theta_s(t)$,
then $\theta_n(s;0)=\theta_n(s)$, and logarithmic differentiation gives
\begin{equation}
	\left.
	\frac{\dd}{\dd t}\log\theta_n(s;t)
	\right|_{t=0}
	=
	D_1(n)-D_{1/2}(n)
	+
	\frac12
	\left[
	D_{s+\frac12}(n)+D_{s+\frac32}(n)
	-D_s(n)-D_{s+1}(n)
	\right].
	\label{eq:EK-theta-logder}
\end{equation}

Comparing \eqref{eq:EK-theta-logder} with
\eqref{eq:EK-Cn-decomposition}, we obtain
\begin{equation}
	C_n(s)
	=
	\beta_E(s)
	+
	\left.
	\frac{\dd}{\dd t}\log\theta_n(s;t)
	\right|_{t=0}.
\end{equation}
Multiplying by $\theta_n(s)=\theta_n(s;0)$ gives
\begin{equation}
	\theta_n(s)C_n(s)
	=
	\beta_E(s)\theta_n(s)
	+
	\left.
	\frac{\dd}{\dd t}\theta_n(s;t)
	\right|_{t=0}.
\end{equation}

Using \eqref{eq:EK-moment-series} and summing over $n$, we obtain
\begin{equation}
	M_{10}(s)
	=
	J_E(s)
	\left[
	\Theta_s'(0)+\beta_E(s)\Theta_s(0)
	\right].
	\label{eq:EK-parameter-derivative}
\end{equation}

Thus the mixed elliptic moment is again expressed as a completed
parameter derivative of a balanced ${}_4F_3(1)$ family.

\subsubsection{The \(e=1\) collision in Mishev's \(L\)-function}

Along the balanced path \eqref{eq:EK-balanced-path}, the distinguished
lower parameter is \(e=1-t\), and hence
\[
\sin(\pi e)
=
\sin\bigl(\pi(1-t)\bigr)
=
\sin(\pi t).
\]

Substituting the parameters of \(\mathfrak r_s(t)\) into the gamma
denominator in Mishev's definition \eqref{eq:mishev-L}, we obtain the
normalization of the first branch
\begin{equation}
	\begin{aligned}
		\mathcal G_s(t):={}&
		\Gamma(1-t)
		\Gamma\!\left(s+\frac32-\frac t2\right)
		\Gamma\!\left(s+\frac12-\frac t2\right)
		\\
		&\times
		\Gamma\!\left(s+1+\frac t2\right)
		\Gamma\!\left(s+\frac t2\right)
		\Gamma\!\left(\frac12+\frac t2\right)^2.
	\end{aligned}
\end{equation}
Thus the first normalized \(\F43\) branch is
\[
\frac{\Theta_s(t)}{\mathcal G_s(t)}.
\]

For the supplementary branch, the four upper parameters are
\[
s+1+\frac t2,\qquad
s+\frac t2,\qquad
\frac12+\frac t2,\qquad
\frac12+\frac t2,
\]
while the three lower parameters are
\[
1+t,\qquad
s+\frac32+\frac t2,\qquad
s+\frac12+\frac t2.
\]
After permutation, the corresponding hypergeometric series is
\(\Theta_s(-t)\), and the accompanying gamma normalization is
\(\mathcal G_s(-t)\).  Therefore
\begin{equation}
	L(\mathfrak r_s(t))
	=
	\frac{1}{\sin(\pi t)}
	\left[
	\frac{\Theta_s(t)}{\mathcal G_s(t)}
	-
	\frac{\Theta_s(-t)}{\mathcal G_s(-t)}
	\right].
\end{equation}

It is convenient to introduce the completed hypergeometric branch
\begin{equation}
	\widehat{\Theta}_s(t)
	:=
	\frac{\Theta_s(t)}{\mathcal G_s(t)}.
\end{equation}
Then
\begin{equation}
	L(\mathfrak r_s(t))
	=
	\frac{
		\widehat{\Theta}_s(t)-\widehat{\Theta}_s(-t)
	}{
		\sin(\pi t)
	}.
\end{equation}
Since \(\mathfrak r_s(0)=Q_{10}(s)\), letting \(t\to0\) gives
\begin{equation}
	L(Q_{10}(s))
	=
	\frac{2}{\pi}\widehat{\Theta}_s'(0).
\end{equation}

At \(t=0\),
\begin{equation}
	\mathcal G_s(0)
	=
	\pi\,
	\Gamma(s)\Gamma(s+1)
	\Gamma\!\left(s+\frac12\right)
	\Gamma\!\left(s+\frac32\right).
\end{equation}
Moreover, logarithmic differentiation gives
\begin{equation}
	-\frac{\mathcal G_s'(0)}{\mathcal G_s(0)}
	=
	\beta_E(s).
\end{equation}
Consequently,
\begin{equation}
	\widehat{\Theta}_s'(0)
	=
	\frac{1}{\mathcal G_s(0)}
	\left[
	\Theta_s'(0)+\beta_E(s)\Theta_s(0)
	\right],
\end{equation}
and hence
\begin{equation}
	L(Q_{10}(s))
	=
	\frac{2}{\pi\mathcal G_s(0)}
	\left[
	\Theta_s'(0)+\beta_E(s)\Theta_s(0)
	\right].
	\label{eq:EK-completed-L}
\end{equation}

Comparing \eqref{eq:EK-completed-L} with
\eqref{eq:EK-parameter-derivative}, and using \eqref{eq:JE}, we obtain
\begin{equation}
	M_{10}(s)
	=
	\frac{\pi^3}{8}
	s^2\Gamma(s)^4
	L(Q_{10}(s)).
	\label{eq:EK-Mishev}
\end{equation}
Thus the mixed moment admits the same completed-\(L\) interpretation as
the \(K'^2\) family.

\subsubsection{The two very-well-poised representations}

Mishev's $L$-function is symmetric in the lower parameters $f$ and $g$:
\[
L(a,b,c,d;e;f,g)
=
L(a,b,c,d;e;g,f).
\]
By the definition of the points $Q_{\varepsilon,\delta}(s)$, interchanging
$f$ and $g$ sends $Q_{10}(s)$ to $Q_{01}(s)$.  Hence
\begin{equation}
	L(Q_{10}(s))
	=
	L(Q_{01}(s)).
	\label{eq:EK-fg-symmetry}
\end{equation}
This symmetry explains the two very-well-poised ${}_7F_6(1)$
representations of the same mixed moment obtained by Wan.

Applying the Bailey--Mishev representation
\eqref{eq:bailey-mishev} to $Q_{10}(s)$ gives
\begin{equation}
	L(Q_{10}(s))
	=
	\frac{1}
	{\pi\Gamma(s+\frac12)^3\Gamma(s+\frac32)}
	\,\mathcal W_{10}(s),
	\label{eq:EK-L-Q10}
\end{equation}
where
\begin{equation}
	\mathcal W_{10}(s)
	:=
	\F76\!\left(
	\begin{matrix}
		s,\ 1+\frac s2,\ s,\ -\frac12,\ \frac12,\ \frac12,\ \frac12\\
		\frac s2,\ 1,\ s+\frac12,\ s+\frac12,\ s+\frac12,\ s+\frac32
	\end{matrix};1
	\right).
	\label{eq:EK-W10}
\end{equation}
Combining \eqref{eq:EK-L-Q10} with \eqref{eq:EK-Mishev} yields
\begin{equation}
	M_{10}(s)
	=
	\frac{\pi^2s^2}{4(2s+1)}
	\frac{\Gamma(s)^4}{\Gamma(s+\frac12)^4}
	\,\mathcal W_{10}(s).
	\label{eq:EK-Wan-first}
\end{equation}
With Wan's parameter $n=2s-1$, this is precisely the first
hypergeometric representation of the mixed moment in
Wan's Proposition~1(2), equation~(17)
\cite[p.~125]{MR2845511}.
The prefactors agree by Legendre's duplication formula.

On the other hand, applying \eqref{eq:bailey-mishev} to
$Q_{01}(s)$ gives
\begin{equation}
	L(Q_{01}(s))
	=
	\frac{s(s+1)}
	{\pi\Gamma(s+\frac12)\Gamma(s+\frac32)^3}
	\,\mathcal W_{01}(s),
	\label{eq:EK-L-Q01}
\end{equation}
where
\begin{equation}
	\mathcal W_{01}(s)
	:=
	\F76\!\left(
	\begin{matrix}
		s+1,\ \frac{s+3}{2},\ s+1,\ \frac12,\ \frac12,\ \frac12,\ \frac32\\
		\frac{s+1}{2},\ 1,\ s+\frac12,\ s+\frac32,\ s+\frac32,\ s+\frac32
	\end{matrix};1
	\right).
	\label{eq:EK-W01}
\end{equation}
Using \eqref{eq:EK-Mishev} again, we obtain
\begin{equation}
	M_{10}(s)
	=
	\frac{\pi^2s^3(s+1)}
	{(2s+1)^3}
	\frac{\Gamma(s)^4}{\Gamma(s+\frac12)^4}
	\,\mathcal W_{01}(s).
	\label{eq:EK-Wan-second}
\end{equation}
Under the same substitution $n=2s-1$, this is the second
hypergeometric representation in Wan's Proposition~1(2), equation~(17)
\cite[p.~125]{MR2845511}.

Thus Wan's two apparently different very-well-poised evaluations arise
from the single symmetry \eqref{eq:EK-fg-symmetry}.  Comparing
\eqref{eq:EK-Wan-first} and \eqref{eq:EK-Wan-second} gives
\begin{equation}
	\mathcal W_{10}(s)
	=
	\frac{4s(s+1)}{(2s+1)^2}\,
	\mathcal W_{01}(s).
	\label{eq:EK-W-relation}
\end{equation}
Hence \eqref{eq:EK-W-relation} is the $f\leftrightarrow g$ symmetry of
Mishev's completed $L$-function transported to its two very-well-poised
${}_7F_6(1)$ realizations.

\subsection{The $E'^2$ family: direct collision--Cayley proof}
\label{subsec:wan-e2-collision}

The remaining vertex of the complementary-moment family is
\begin{equation}
Q_{11}(s)=\left(s+2,s,\frac12,\frac12;1;s+\frac32,s+\frac32\right).
\label{eq:Q11}
\end{equation}
We now derive the $E'^2$ moment directly from Cayley's
complementary-modulus expansion and the $W(D_5)$ collision mechanism.
We continue to use the coefficients $\gamma_n$ and $d_n$ introduced in
\eqref{eq:cayley-Kprime}.  The complementary derivative identity
\begin{equation}
	E'(x)
	=
	x^2K'(x)-x(1-x^2)\frac{\dd}{\dd x}K'(x)
	\label{eq:Eprime-from-Kprime}
\end{equation}
together with Cayley's expansion \eqref{eq:cayley-Kprime} for $K'$
gives \cite[p.~54]{MR124532}
\begin{equation}
	E'(x)
	=
	1+
	\sum_{n=1}^{\infty}
	\frac{2n}{2n-1}\,
	\gamma_n x^{2n}
	\left(
	\log\frac4x-d_n+\frac{1}{2n(2n-1)}
	\right),
	\qquad 0<x<1.
	\label{eq:cayley-Eprime}
\end{equation}

\subsubsection{Reduction to one-fold Mellin transforms}
\label{subsubsec:e2dc-mellin}

Substituting Cayley's expansion \eqref{eq:cayley-Eprime} into one
factor of the integrand defining $M_{11}(s)$ in
\eqref{eq:moment-family}, and using \eqref{eq:JE-log-moment}, gives
\begin{equation}
	M_{11}(s)
	=
	J_E(s)
	+
	\sum_{n=1}^{\infty}
	\alpha_n J_E(s+n)\,\mathcal B_n(s),
	\label{eq:moment-family-one-sum}
\end{equation}
where
\begin{equation}
	\alpha_n
	:=
	\frac{2n}{2n-1}\gamma_n
	\label{eq:e2dc-alpha}
\end{equation}
and
\begin{equation}
	\mathcal B_n(s)
	:=
	\log4-d_n+\frac{1}{2n(2n-1)}
	-\frac12\frac{J_E'(s+n)}{J_E(s+n)}.
	\label{eq:e2dc-Bn}
\end{equation}

The shift $n=m+1$ reveals the balanced hypergeometric coefficient.
Define
\begin{equation}
	p_m(s)
	:=
	\frac{
		(s+1)_m(s+2)_m(\frac12)_m(\frac32)_m
	}{
		(2)_m(s+\frac32)_m(s+\frac52)_m\,m!
	}.
	\label{eq:e2dc-pm}
\end{equation}
A direct Pochhammer calculation gives
\begin{equation}
	\alpha_{m+1}J_E(s+m+1)
	=
	\frac12 J_E(s+1)p_m(s).
	\label{eq:e2dc-coeff-shift}
\end{equation}
Hence, after the index shift $n=m+1$, the coefficients of the sum in
\eqref{eq:moment-family-one-sum} are governed by the balanced series
\begin{equation}
	\Ffourthree\!\left(
	\begin{matrix}
		s+1,\ s+2,\ \frac12,\ \frac32\\
		2,\ s+\frac32,\ s+\frac52
	\end{matrix};1
	\right).
	\label{eq:e2dc-balanced-43}
\end{equation}
\subsubsection{The reflected collision path}
\label{subsubsec:e2dc-reflection}

Consider the balanced collision path
\begin{equation}
	Q_{11}(s;t)
	=
	\left(
	s+2-\frac t2,\,
	s-\frac t2,\,
	\frac12-\frac t2,\,
	\frac12-\frac t2;
	1-t;\,
	s+\frac32-\frac t2,\,
	s+\frac32-\frac t2
	\right).
	\label{eq:e2dc-Q11-path}
\end{equation}
At $t=0$ this is $Q_{11}(s)$.  Before applying Mishev's involution
\eqref{eq:mishev-involution}, reorder the upper parameters as
\[
\left(
\frac12-\frac t2,\,
s+2-\frac t2,\,
s-\frac t2,\,
\frac12-\frac t2
\right).
\]
After this permutation, the involution $\mathcal A$ sends the point to
\begin{equation}
	R_s(t)
	=
	\left(
	\frac12-\frac t2,\,
	s+2-\frac t2,\,
	\frac32,\,
	s+1;
	2-\frac t2;\,
	s+\frac52,\,
	s+\frac32-\frac t2
	\right).
	\label{eq:e2dc-R-path}
\end{equation}
By the upper-parameter symmetry and Mishev's $W(D_5)$ invariance,
\begin{equation}
	L(Q_{11}(s;t))
	=
	L(R_s(t)).
	\label{eq:e2dc-L-invariance}
\end{equation}

Write the first hypergeometric branch of \eqref{eq:mishev-L} at
$R_s(t)$ as
\begin{equation}
	F_s(t)
	:=
	\Ffourthree\!\left(
	\begin{matrix}
		\frac12-\frac t2,\,
		s+2-\frac t2,\,
		\frac32,\,
		s+1\\
		2-\frac t2,\,
		s+\frac52,\,
		s+\frac32-\frac t2
	\end{matrix};1
	\right).
	\label{eq:e2dc-F}
\end{equation}
At $t=0$, this is precisely the balanced ${}_4F_3(1)$ appearing in
\eqref{eq:e2dc-balanced-43}.  Its gamma denominator is
\begin{equation}
	\begin{aligned}
		\Delta_1(t):={}&
		\Gamma\!\left(2-\frac t2\right)
		\Gamma\!\left(s+\frac52\right)
		\Gamma\!\left(s+\frac32-\frac t2\right)
		\Gamma\!\left(-\frac12\right)
		\\
		&\times
		\Gamma(s+1)
		\Gamma\!\left(\frac12+\frac t2\right)
		\Gamma\!\left(s+\frac t2\right).
	\end{aligned}
	\label{eq:e2dc-D1}
\end{equation}

The supplementary hypergeometric branch is
\begin{equation}
	\widetilde F_s(t)
	:=
	\Ffourthree\!\left(
	\begin{matrix}
		-\frac12,\,
		s+1,\,
		\frac12+\frac t2,\,
		s+\frac t2\\
		s+\frac32+\frac t2,\,
		s+\frac12,\,
		\frac t2
	\end{matrix};1
	\right),
	\label{eq:e2dc-S}
\end{equation}
with gamma denominator
\begin{equation}
	\begin{aligned}
		\Delta_2(t):={}&
		\Gamma\!\left(\frac12-\frac t2\right)
		\Gamma\!\left(s+2-\frac t2\right)
		\Gamma\!\left(\frac32\right)
		\Gamma(s+1)
		\\
		&\times
		\Gamma\!\left(s+\frac32+\frac t2\right)
		\Gamma\!\left(s+\frac12\right)
		\Gamma\!\left(\frac t2\right).
	\end{aligned}
	\label{eq:e2dc-D2}
\end{equation}

Since
\[
\sin\!\left(\pi\left(2-\frac t2\right)\right)
=
-\sin\frac{\pi t}{2},
\]
Mishev's completed $L$-function takes the form
\begin{equation}
	L(R_s(t))
	=
	-\frac{1}{\sin(\pi t/2)}
	\left[
	\frac{F_s(t)}{\Delta_1(t)}
	-
	\frac{\widetilde F_s(t)}{\Delta_2(t)}
	\right].
	\label{eq:e2dc-LR}
\end{equation}
\subsubsection{The degenerate supplementary branch}
\label{subsubsec:e2dc-degeneration}

Set
\begin{equation}
	H_1(t):=\frac{F_s(t)}{\Delta_1(t)},
	\qquad
	H_2(t):=\frac{\widetilde F_s(t)}{\Delta_2(t)}.
	\label{eq:e2dc-H12}
\end{equation}
The lower parameter $t/2$ in $\widetilde F_s(t)$ is compensated by the
factor $1/\Gamma(t/2)$ in $1/\Delta_2(t)$.  Indeed, for $n\ge1$,
\begin{equation}
	\frac{1}{\Gamma(t/2)(t/2)_n}
	=
	\frac{1}{\Gamma(n+t/2)},
	\label{eq:e2dc-gamma-pairing}
\end{equation}
which is regular at $t=0$.  The normalized $n=0$ term behaves
differently, since $1/\Gamma(t/2)=O(t)$.

Write
\begin{equation}
	H_1(t)=\sum_{m=0}^{\infty}u_m(t),
	\qquad
	H_2(t)=v_0(t)+\sum_{m=0}^{\infty}v_{m+1}(t),
	\label{eq:e2dc-term-decomp}
\end{equation}
where $u_m(t)$ and $v_n(t)$ denote the normalized terms of the first
and supplementary branches, respectively.

\begin{lemma}[Matching of the shifted coefficients]
	\label{lem:e2dc-base-match}
	For every $m\ge0$,
	\begin{equation}
		v_{m+1}(0)=u_m(0).
		\label{eq:e2dc-base-match}
	\end{equation}
	Moreover, if
	\begin{equation}
		\mathcal N_s
		:=
		\frac{\pi^2}{4}
		\Gamma(s)\Gamma(s+1)^2\Gamma(s+2),
		\label{eq:e2dc-Ns}
	\end{equation}
	then
	\begin{equation}
		\mathcal N_su_m(0)
		=
		-\alpha_{m+1}J_E(s+m+1).
		\label{eq:e2dc-first-coeff-match}
	\end{equation}
\end{lemma}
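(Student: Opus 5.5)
The plan is to prove both assertions by writing every normalized term as a ratio of gamma functions and comparing them factor by factor. No hypergeometric identity is needed beyond $(a)_n=\Gamma(a+n)/\Gamma(a)$ and the pairing \eqref{eq:e2dc-gamma-pairing}.

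\emph{Step 1: the shifted coefficients.} At $t=0$, the $m$-th normalized term of the first branch is
\[
u_m(0)=\frac{(\frac12)_m(s+2)_m(\frac32)_m(s+1)_m}{(2)_m(s+\frac52)_m(s+\frac32)_m\,m!\,\Delta_1(0)} .
\]
Each lower Pochhammer symbol combines with the matching gamma factor of $\Delta_1(0)$: $\Gamma(2)(2)_m=\Gamma(m+2)$, and similarly for the factors with arguments $s+\frac52$ and $s+\frac32$. The result is
\[
u_m(0)=\frac{\Gamma(\frac12+m)\Gamma(s+2+m)\Gamma(\frac32+m)\Gamma(s+1+m)}
{\Gamma(\frac12)\Gamma(s+2)\Gamma(\frac32)\Gamma(s+1)\;\Gamma(m+2)\Gamma(s+\frac52+m)\Gamma(s+\frac32+m)\,m!\;\Gamma(-\frac12)\Gamma(s+1)\Gamma(\frac12)\Gamma(s)} .
\]
For $v_{m+1}(0)$, put $n=m+1$ in the supplementary branch. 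Use \eqref{eq:e2dc-gamma-pairing} to replace $1/(\Gamma(t/2)(t/2)_n)$ by $1/\Gamma(n)=1/m!$. Absorb $(s+\frac32)_n$ and $(s+\frac12)_n$ into the factors $\Gamma(s+\frac32)$ and $\Gamma(s+\frac12)$ of $\Delta_2(0)$. Write the upper symbols $(-\frac12)_n,(s+1)_n,(\frac12)_n,(s)_n$ as gamma quotients, leaving $\Gamma(\frac12)\Gamma(s+2)\Gamma(\frac32)\Gamma(s+1)$ from $\Delta_2(0)$. Because $n=m+1$, the shifted arguments line up: $\Gamma(-\frac12+n)=\Gamma(\frac12+m)$, $\Gamma(\frac12+n)=\Gamma(\frac32+m)$, $\Gamma(s+n)=\Gamma(s+1+m)$, $\Gamma(s+1+n)=\Gamma(s+2+m)$, $\Gamma(s+\frac32+n)=\Gamma(s+\frac52+m)$, $\Gamma(s+\frac12+n)=\Gamma(s+\frac32+m)$, and $n!=\Gamma(m+2)$. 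The normalizing constants are the same multiset $\{\Gamma(-\frac12),\Gamma(\frac12)^2,\Gamma(\frac32),\Gamma(s),\Gamma(s+1)^2,\Gamma(s+2)\}$ in both cases. Hence $v_{m+1}(0)=u_m(0)$, which is \eqref{eq:e2dc-base-match}.

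\emph{Step 2: the normalization \eqref{eq:e2dc-first-coeff-match}.} By \eqref{eq:e2dc-coeff-shift}, it suffices to show $\mathcal N_s u_m(0)=-\frac12 J_E(s+1)p_m(s)$. By construction, $u_m(0)=p_m(s)/\Delta_1(0)$, so the identity reduces to the single $m$-independent relation
\[
\mathcal N_s=-\tfrac12 J_E(s+1)\,\Delta_1(0).
\]
To check it, substitute \eqref{eq:JE} at $u=s+1$, giving $J_E(s+1)=\frac{\pi}{4}\Gamma(s+1)\Gamma(s+2)/(\Gamma(s+\frac32)\Gamma(s+\frac52))$. Also substitute $\Delta_1(0)=\Gamma(s+\frac52)\Gamma(s+\frac32)\Gamma(-\frac12)\Gamma(s+1)\Gamma(\frac12)\Gamma(s)$, using $\Gamma(2)=1$. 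The factors $\Gamma(s+\frac32)\Gamma(s+\frac52)$ cancel. Then $\Gamma(-\frac12)\Gamma(\frac12)=-2\pi$ gives $-\frac12\cdot\frac{\pi}{4}\cdot(-2\pi)\,\Gamma(s)\Gamma(s+1)^2\Gamma(s+2)=\frac{\pi^2}{4}\Gamma(s)\Gamma(s+1)^2\Gamma(s+2)=\mathcal N_s$.

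The argument is pure bookkeeping. I expect the main obstacle to be tracking the signs and the single $\Gamma(-\frac12)$: the minus sign in \eqref{eq:e2dc-first-coeff-match} comes entirely from $\Gamma(-\frac12)<0$. A second point needing care is applying the pairing \eqref{eq:e2dc-gamma-pairing} only for $n\ge1$; the $n=0$ term $v_0$ is deliberately excluded, since it vanishes at $t=0$ and contributes only through its derivative.
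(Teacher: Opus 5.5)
Your proof is correct and takes essentially the same route as the paper. You reduce the normalization claim to $\mathcal N_s=-\tfrac12 J_E(s+1)\Delta_1(0)$ via $\Gamma(-\tfrac12)\Gamma(\tfrac12)=-2\pi$ and \eqref{eq:e2dc-coeff-shift}, and you obtain $v_{m+1}(0)=u_m(0)$ from the pairing \eqref{eq:e2dc-gamma-pairing} together with the index shift $n=m+1$, which is the paper's $(a)_{m+1}=a(a+1)_m$ step written out as gamma quotients.
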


\begin{proof}
	At $t=0$, the first branch \eqref{eq:e2dc-F} is precisely the balanced
	series \eqref{eq:e2dc-balanced-43}, so its $m$th series coefficient is
	$p_m(s)$.  Moreover,
	\[
	\Delta_1(0)
	=
	-2\pi\,
	\Gamma(s)\Gamma(s+1)
	\Gamma\!\left(s+\frac32\right)
	\Gamma\!\left(s+\frac52\right).
	\]
	Hence
	\begin{equation}
		\mathcal N_su_m(0)
		=
		-\frac{\pi}{8}
		\frac{\Gamma(s+1)\Gamma(s+2)}
		{\Gamma(s+\frac32)\Gamma(s+\frac52)}
		p_m(s)
		=
		-\frac12J_E(s+1)p_m(s).
	\end{equation}
	The second assertion therefore follows from
	\eqref{eq:e2dc-coeff-shift}.  On the other hand, using
	\eqref{eq:e2dc-gamma-pairing} together with
	$(a)_{m+1}=a(a+1)_m$ gives
	\[
	v_{m+1}(0)=u_m(0).
	\]
\end{proof}

\begin{lemma}[The isolated $n=0$ term]
	\label{lem:e2dc-n0}
	The supplementary $n=0$ term satisfies $v_0(0)=0$ and
	\begin{equation}
		\mathcal N_sv_0'(0)=J_E(s).
		\label{eq:e2dc-n0-match}
	\end{equation}
\end{lemma}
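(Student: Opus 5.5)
The plan is to compute $v_0(t)$ explicitly and expand it to first order in $t$. No summation is involved, since the $n=0$ term of $\widetilde F_s(t)$ in \eqref{eq:e2dc-S} equals $1$ identically. By the decomposition \eqref{eq:e2dc-term-decomp}, the normalized term is therefore just the reciprocal of the gamma denominator \eqref{eq:e2dc-D2}:
\[
v_0(t)=\frac{1}{\Delta_2(t)}
=\frac{1}{\Gamma\!\left(\frac t2\right)}\cdot
\frac{1}{\Gamma\!\left(\frac12-\frac t2\right)\Gamma\!\left(s+2-\frac t2\right)\Gamma\!\left(\frac32\right)\Gamma(s+1)\Gamma\!\left(s+\frac32+\frac t2\right)\Gamma\!\left(s+\frac12\right)}.
\]
Write $G_s(t)$ for the second factor. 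For $\Re s>0$ it is holomorphic and nonvanishing near $t=0$.

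The key step uses the expansion $1/\Gamma(z)=z+O(z^2)$ at $z=0$. This gives $1/\Gamma(t/2)=t/2+O(t^2)$, which immediately yields $v_0(0)=0$. By the product rule, it also gives $v_0'(0)=\tfrac12 G_s(0)$, because the term involving $G_s'(0)$ is multiplied by $1/\Gamma(0)=0$. This is the same mechanism as in \eqref{eq:e2dc-gamma-pairing}, except that for $n=0$ there is no Pochhammer factor to absorb the zero.

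It remains to evaluate $G_s(0)$. Using $\Gamma(\frac12)\Gamma(\frac32)=\pi/2$, one gets
\[
v_0'(0)=\frac{1}{\pi\,\Gamma(s+1)\Gamma(s+2)\Gamma\!\left(s+\frac12\right)\Gamma\!\left(s+\frac32\right)}.
\]
Multiplying by $\mathcal N_s$ from \eqref{eq:e2dc-Ns} cancels $\Gamma(s+1)\Gamma(s+2)$ and leaves
\[
\mathcal N_s v_0'(0)=\frac{\pi}{4}\,\frac{\Gamma(s)\Gamma(s+1)}{\Gamma\!\left(s+\frac12\right)\Gamma\!\left(s+\frac32\right)}.
\]
By \eqref{eq:JE} this is exactly $J_E(s)$, which proves \eqref{eq:e2dc-n0-match}.

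There is no real obstacle. The only point requiring care is that the derivative of $G_s$ makes no contribution, precisely because $1/\Gamma(t/2)$ vanishes at $t=0$, and that the constants $\Gamma(\frac12)\Gamma(\frac32)$ and the factor $\frac12$ from $t/2$ are combined correctly.
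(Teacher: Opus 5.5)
Your proof is correct and is the paper's argument: the $n=0$ term is $1/\Delta_2(t)$, and the expansion $1/\Gamma(t/2)=t/2+O(t^2)$ gives both $v_0(0)=0$ and $v_0'(0)=\frac{1}{\pi\Gamma(s+1)\Gamma(s+2)\Gamma(s+\frac12)\Gamma(s+\frac32)}$. Multiplying by $\mathcal N_s$ then recovers $J_E(s)$ via \eqref{eq:JE}; you additionally spell out the product-rule step and the constant $\Gamma(\frac12)\Gamma(\frac32)=\pi/2$, which the paper leaves implicit.
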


\begin{proof}
	Since
	\[
	\frac{1}{\Gamma(t/2)}
	=
	\frac t2+O(t^2),
	\]
	we have
	\begin{equation}
		v_0'(0)
		=
		\frac{1}
		{\pi\Gamma(s+1)\Gamma(s+2)
			\Gamma(s+\frac12)\Gamma(s+\frac32)}.
	\end{equation}
	Multiplying by \eqref{eq:e2dc-Ns} and using \eqref{eq:JE} at $u=s$
	gives \eqref{eq:e2dc-n0-match}.
\end{proof}

The preceding lemmas show that
\begin{equation}
	H_1(0)=H_2(0).
	\label{eq:e2dc-H-collision}
\end{equation}
Consequently, the apparent singularity in \eqref{eq:e2dc-LR} at
$t=0$ is removable.  Using \eqref{eq:e2dc-L-invariance}, we obtain
\begin{equation}
	L(Q_{11}(s))
	=
	L(R_s(0))
	=
	\frac{2}{\pi}
	\bigl(H_2'(0)-H_1'(0)\bigr).
	\label{eq:e2dc-L-derivative}
\end{equation}

\subsubsection{Matching the logarithmic derivatives}
\label{subsubsec:e2dc-logder}

For $m\ge0$, put $n=m+1$ and define
\begin{equation}
	\ell_m^{(1)}
	:=
	\frac{u_m'(0)}{u_m(0)},
	\qquad
	\ell_n^{(2)}
	:=
	\frac{v_n'(0)}{v_n(0)}.
\end{equation}
Logarithmic differentiation of the Pochhammer and gamma factors gives
\begin{equation}
	\begin{aligned}
		\ell_m^{(1)}
		=
		\frac12\bigg[
		&-\psi\!\left(n-\frac12\right)
		-\psi(s+n+1)
		+\psi(s+2)
		+\psi(n+1)
		\\
		&+\psi\!\left(s+n+\frac12\right)
		-\psi(s)
		\bigg],
	\end{aligned}
	\label{eq:e2dc-log-first}
\end{equation}
and
\begin{equation}
	\begin{aligned}
		\ell_n^{(2)}
		=
		\frac12\bigg[
		&\psi\!\left(n+\frac12\right)
		+\psi(s+n)
		-\psi(s)
		-\psi\!\left(s+n+\frac32\right)
		\\
		&+\psi(s+2)
		-\psi(n)
		\bigg].
	\end{aligned}
	\label{eq:e2dc-log-second}
\end{equation}
Subtracting gives
\begin{equation}
	\begin{aligned}
		\ell_n^{(2)}-\ell_m^{(1)}
		={}&
		\frac12\bigg[
		\psi\!\left(n-\frac12\right)
		+\psi\!\left(n+\frac12\right)
		-\psi(n)-\psi(n+1)
		\bigg]
		\\
		&+
		\frac12\bigg[
		\psi(s+n)+\psi(s+n+1)
		-\psi\!\left(s+n+\frac12\right)
		-\psi\!\left(s+n+\frac32\right)
		\bigg].
	\end{aligned}
	\label{eq:e2dc-log-diff-raw}
\end{equation}
By logarithmic differentiation of \eqref{eq:JE}, the second bracket is
\[
\frac{J_E'(s+n)}{J_E(s+n)}.
\]
For the first bracket, using
\[
\psi\!\left(n+\frac12\right)-\psi(n+1)
=
-\log4+d_n
\]
together with the recurrence
$\psi(z+1)=\psi(z)+1/z$, we obtain
\begin{equation}
	\frac12\bigg[
	\psi\!\left(n-\frac12\right)
	+\psi\!\left(n+\frac12\right)
	-\psi(n)-\psi(n+1)
	\bigg]
	=
	-\log4+d_n-\frac{1}{2n(2n-1)}.
	\label{eq:e2dc-half-digamma}
\end{equation}
Therefore
\begin{equation}
	\ell_n^{(2)}-\ell_{n-1}^{(1)}
	=
	-\mathcal B_n(s).
	\label{eq:e2dc-log-match}
\end{equation}
Thus the logarithmic derivative produced by the reflected collision
matches exactly the bracket $\mathcal B_n(s)$ arising from Cayley's
expansion of $E'$.

\subsubsection{Completion of the direct proof}

From \eqref{eq:e2dc-base-match} and \eqref{eq:e2dc-log-match},
\begin{equation}
	v_{m+1}'(0)-u_m'(0)
	=
	-u_m(0)\mathcal B_{m+1}(s).
\end{equation}
Multiplying by $\mathcal N_s$ and using
\eqref{eq:e2dc-first-coeff-match} gives
\begin{equation}
	\mathcal N_s
	\bigl(v_{m+1}'(0)-u_m'(0)\bigr)
	=
	\alpha_{m+1}J_E(s+m+1)\mathcal B_{m+1}(s).
\end{equation}
Together with \eqref{eq:e2dc-n0-match}, summation over $m\ge0$ yields
\begin{align}
	\mathcal N_s\bigl(H_2'(0)-H_1'(0)\bigr)
	&=
	J_E(s)
	+
	\sum_{n=1}^{\infty}
	\alpha_nJ_E(s+n)\mathcal B_n(s)
	\\
	&=
	M_{11}(s),
\end{align}
where the last equality is \eqref{eq:moment-family-one-sum}.
Using \eqref{eq:e2dc-L-derivative}, we therefore obtain
\begin{equation}
	M_{11}(s)
	=
	\frac{\pi}{2}\mathcal N_s\,L(Q_{11}(s)).
\end{equation}
Since
\[
\frac{\pi}{2}\mathcal N_s
=
\frac{\pi^3}{8}
\Gamma(s)\Gamma(s+1)^2\Gamma(s+2),
\]
we arrive at the following formula.

\begin{theorem}[$E'^2$ moment formula]
	\label{thm:e2-direct}
	For $\Re s>0$,
	\begin{equation}
		M_{11}(s)
		=
		\int_0^1x^{2s-1}E'(x)^2\,\dd x
		=
		\frac{\pi^3}{8}
		\Gamma(s)\Gamma(s+1)^2\Gamma(s+2)
		L(Q_{11}(s)).
	\end{equation}
\end{theorem}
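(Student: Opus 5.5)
The theorem collects the computation carried out in the preceding subsubsections, so the proof will assemble those steps in order. First, substitute Cayley's expansion \eqref{eq:cayley-Eprime} for one factor of $E'(x)$ in $M_{11}(s)$. Integrate term by term against $x^{2s-1}E'(x)$, using \eqref{eq:JE-log-moment} for the logarithmic terms. This gives the one-sum representation \eqref{eq:moment-family-one-sum}, with bracket $\mathcal B_n(s)$ from \eqref{eq:e2dc-Bn}. Term-by-term integration is justified for real $s>0$: for large $n$ the terms $\alpha_n x^{2n}(\log\frac4x - d_n + \cdots)$ are eventually of one sign, so monotone or dominated convergence applies. The identity then extends to $\Re s>0$ by analytic continuation, since both sides are holomorphic there.

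Second, rewrite the right-hand side through the reflected collision path. By the permutation of upper parameters followed by Mishev's involution $\mathcal A$, the relation \eqref{eq:e2dc-L-invariance} gives $L(Q_{11}(s;t))=L(R_s(t))$. The expansion \eqref{eq:e2dc-LR} then expresses this as a divided difference of $H_1$ and $H_2$ over $\sin(\pi t/2)$. Lemma~\ref{lem:e2dc-base-match} and Lemma~\ref{lem:e2dc-n0} give $H_1(0)=H_2(0)$, because the shifted coefficients match and $v_0(0)=0$. Hence the singularity at $t=0$ is removable, and l'H\^opital yields \eqref{eq:e2dc-L-derivative}: $L(Q_{11}(s))=\frac{2}{\pi}(H_2'(0)-H_1'(0))$. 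Continuity of the meromorphic continuation of $L$ along the path identifies this limit with $L(Q_{11}(s))$.

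Third, compute $H_2'(0)-H_1'(0)$ termwise. The $n=0$ term contributes $v_0'(0)$, which equals $J_E(s)/\mathcal N_s$ by Lemma~\ref{lem:e2dc-n0}. For $n=m+1\ge1$, write $v_n'(0)-u_m'(0)=u_m(0)\bigl(\ell_n^{(2)}-\ell_m^{(1)}\bigr)$ using $v_n(0)=u_m(0)$. The digamma bookkeeping \eqref{eq:e2dc-log-diff-raw}--\eqref{eq:e2dc-half-digamma} gives \eqref{eq:e2dc-log-match}, namely $\ell_n^{(2)}-\ell_m^{(1)}=-\mathcal B_n(s)$. Combined with \eqref{eq:e2dc-first-coeff-match}, each term becomes $\alpha_nJ_E(s+n)\mathcal B_n(s)/\mathcal N_s$. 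Summing over $n$ reproduces $M_{11}(s)/\mathcal N_s$, so $M_{11}(s)=\frac{\pi}{2}\mathcal N_s L(Q_{11}(s))$. Inserting \eqref{eq:e2dc-Ns} gives the stated prefactor $\frac{\pi^3}{8}\Gamma(s)\Gamma(s+1)^2\Gamma(s+2)$.

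The main obstacle is justifying the interchange of $\frac{d}{dt}$ at $t=0$ with the infinite sums defining $H_1$ and $H_2$. The algebra itself is routine once Lemmas~\ref{lem:e2dc-base-match} and~\ref{lem:e2dc-n0} are in hand. Both branches are balanced ${}_4F_3(1)$ series whose terms decay like $n^{-2-\Re(\text{excess})}$ uniformly for $t$ near $0$; the excess is $1$ for the first branch. Their $t$-derivatives carry only extra logarithmic factors from digamma terms. I would therefore first verify a uniform bound $O(n^{-2}\log n)$ on a small disc in $t$, then invoke Weierstrass's theorem on uniformly convergent series of holomorphic functions. The analogous care is needed at the supplementary branch's lower parameter $t/2$, which \eqref{eq:e2dc-gamma-pairing} handles termwise.
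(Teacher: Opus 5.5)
Your proposal is correct and follows the paper's direct collision--Cayley proof step for step: the Cayley substitution, the reflected path $R_s(t)$, Lemmas~\ref{lem:e2dc-base-match} and~\ref{lem:e2dc-n0}, the digamma matching \eqref{eq:e2dc-log-match}, and the final summation. The only additions are the justifications for termwise integration and differentiation, which the paper leaves implicit. One harmless slip: a balanced ${}_4F_3(1)$ with excess $1$ has terms of size $O(n^{-2})$, not $n^{-3}$. Your $O(n^{-2}\log n)$ derivative bound is the correct one and still suffices; moreover, since $d_n<\log 4$, every Cayley term is positive, so Tonelli applies directly.
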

The $E'^2$ member of the complementary quadratic moment family therefore
admits a direct Mishev collision proof parallel to the $K'^2$ and $E'K'$
cases.  The new feature is the index-shifting degeneration of the
supplementary branch along the reflected path $R_s(t)$.

\subsubsection{The very-well-poised form}
\label{subsubsec:e2dc-vwp}
Applying the Bailey--Mishev representation
\eqref{eq:bailey-mishev} at $Q_{11}(s)$ gives
\begin{equation}
	L(Q_{11}(s))
	=
	\frac{\Gamma(s+2)}
	{\pi\Gamma(s+\frac32)^2
		\Gamma(s+1)\Gamma(s+\frac52)\Gamma(s+\frac12)}
	\,\mathcal W_{11}(s),
\end{equation}
where
\begin{equation}
	\mathcal W_{11}(s)
	:=
	\F76\!\left(
	\begin{matrix}
		s+1,\ \frac{s+3}{2},\ s+1,\ -\frac12,\ \frac12,\ \frac12,\ \frac32\\
		\frac{s+1}{2},\ 1,\ s+\frac12,\ s+\frac32,\ s+\frac32,\ s+\frac52
	\end{matrix};1
	\right).
\end{equation}
Combining this with Theorem~\ref{thm:e2-direct} yields
\begin{equation}
	M_{11}(s)
	=
	\frac{2\pi^2s^3(s+1)^2}
	{(2s+1)^3(2s+3)}
	\frac{\Gamma(s)^4}{\Gamma(s+\frac12)^4}
	\,\mathcal W_{11}(s).
\end{equation}
With Wan's parameter $n=2s-1$, this is the very-well-poised
representation of the complementary $E'^2$ moment in
Wan's Proposition~1(1), equation~(16)
\cite[p.~125]{MR2845511}.
\subsection{The contiguous structure and a Barnes realization}
\label{subsec:wan-contiguous}

We now give a second route to the $E'^2$ moment formula.  The idea is
to construct two parallel contiguous systems.  On the elliptic-integral
side, the differential relations among $K'$ and $E'$ induce, after
taking Mellin moments, relations among $M_{00}$, $M_{10}$, and $M_{11}$.
On the hypergeometric side, we will derive the corresponding contiguous
relations among neighboring values of Mishev's completed $L$-function
directly from its Barnes representation.  Matching the two systems
will recover the $E'^2$ evaluation from the already established
$K'^2$ and $E'K'$ cases.

Recall the moment family \eqref{eq:moment-family}.  Using the
complementary derivative identity \eqref{eq:Eprime-from-Kprime} and
multiplying by $K'(x)$ gives
\begin{equation}
	E'(x)K'(x)
	=
	x^2K'(x)^2
	-
	\frac{x(1-x^2)}{2}
	\frac{\dd}{\dd x}K'(x)^2.
\end{equation}
Multiplying by $x^{2s-1}$, integrating over $(0,1)$, and integrating
by parts, we obtain
\begin{equation}
	M_{10}(s)
	=
	s\bigl(M_{00}(s)-M_{00}(s+1)\bigr).
	\label{eq:wan-M10-contiguous}
\end{equation}
Thus the mixed moment is the first contiguous descendant of the
$K'^2$ Mellin family.

Using the previously established $L$-function representations
\eqref{eq:wan-k2-completed-L} and \eqref{eq:EK-Mishev} for
$M_{00}$ and $M_{10}$, respectively,
\eqref{eq:wan-M10-contiguous} becomes
\begin{equation}
	L(Q_{00}(s))
	-
	s^4L(Q_{00}(s+1))
	=
	s\,L(Q_{10}(s)).
	\label{eq:wan-L10-contiguous}
\end{equation}
Hence the differential relation between $K'$ and $E'$ already induces
a nontrivial contiguous relation among neighboring values of Mishev's
completed $L$-function.

There is a second relation of the same type.  Set
\[
A(x):=K'(x)^2,\qquad
B(x):=E'(x)K'(x),\qquad
C(x):=E'(x)^2.
\]
The complementary derivative identities imply
\[
(1-x^2)B'(x)
=
xA(x)-\frac{1}{x}C(x).
\]
Multiplying by $x^{2s}$ and integrating over $(0,1)$ gives
\[
\int_0^1x^{2s}(1-x^2)B'(x)\,\dd x
=
M_{00}(s+1)-M_{11}(s).
\]
For $\Re s>0$, the boundary term in integration by parts vanishes, and
therefore
\[
\begin{aligned}
	\int_0^1x^{2s}(1-x^2)B'(x)\,\dd x
	&=
	-\int_0^1
	\left[
	2s x^{2s-1}(1-x^2)-2x^{2s+1}
	\right]B(x)\,\dd x
	\\
	&=
	-2sM_{10}(s)+2(s+1)M_{10}(s+1).
\end{aligned}
\]
Consequently,
\begin{equation}
	M_{11}(s)
	=
	M_{00}(s+1)
	+
	2sM_{10}(s)
	-
	2(s+1)M_{10}(s+1).
	\label{eq:wan-M11-contiguous}
\end{equation}

The two relations
\eqref{eq:wan-M10-contiguous} and
\eqref{eq:wan-M11-contiguous} organize the complementary moments into
the contiguous hierarchy
\[
M_{00}
\quad\longrightarrow\quad
M_{10}
\quad\longrightarrow\quad
M_{11}.
\]
The first arrow is already mirrored on the Mishev side by
\eqref{eq:wan-L10-contiguous}.  Our remaining task is to derive the
corresponding $L$-function relation for the second arrow.  Mishev's
Barnes representation provides the natural framework for doing so.

\subsubsection{The normalized \(2\times2\) Mishev family}
\label{subsubsec:wan-normalized-square}

Recall the four points $Q_{\varepsilon,\delta}(s)$ defined in
\eqref{eq:Q-family}.  To absorb the gamma factors appearing in the
moment formulas, set
\begin{equation}
	\Lambda_{\varepsilon,\delta}(s)
	:=
	\Gamma(s)\Gamma(s+\varepsilon)
	\Gamma(s+\delta)\Gamma(s+\varepsilon+\delta)
	L(Q_{\varepsilon,\delta}(s)).
	\label{eq:wan-Lambda-def}
\end{equation}
In particular,
\begin{equation}
	\Lambda_{00}(s)
	=
	\Gamma(s)^4L(Q_{00}(s)),
	\qquad
	\Lambda_{10}(s)
	=
	s^2\Gamma(s)^4L(Q_{10}(s)),
	\label{eq:wan-Lambda-0010}
\end{equation}
and, by the $f\leftrightarrow g$ symmetry,
\begin{equation}
	\Lambda_{01}(s)=\Lambda_{10}(s).
\end{equation}
Moreover,
\begin{equation}
	\Lambda_{11}(s)
	=
	s^3(s+1)\Gamma(s)^4L(Q_{11}(s)).
	\label{eq:wan-Lambda-11}
\end{equation}

The formulas already established for the first two moment families become
\begin{equation}
	M_{00}(s)
	=
	\frac{\pi^3}{8}\Lambda_{00}(s),
	\qquad
	M_{10}(s)
	=
	\frac{\pi^3}{8}\Lambda_{10}(s).
	\label{eq:wan-known-moments-Lambda}
\end{equation}
Accordingly, in view of \eqref{eq:wan-M11-contiguous}, it remains to prove
\begin{equation}
	\Lambda_{11}(s)
	=
	\Lambda_{00}(s+1)
	+
	2s\Lambda_{10}(s)
	-
	2(s+1)\Lambda_{10}(s+1).
	\label{eq:wan-Lambda-target}
\end{equation}
This is the normalized Mishev-side counterpart of the contiguous relation
for $M_{11}(s)$.  To prove \eqref{eq:wan-Lambda-target}, we express all
four terms by Mishev's Barnes representation over a common kernel.
Their prescribed linear combination then reduces to a single Barnes
integral whose integrand is an explicit unit-shift difference
$G_s(t+1)-G_s(t)$.  Integrating this identity and shifting the Barnes
contour by one unit, with no poles crossed and the boundary contributions
vanishing at infinity, shows that the integral is zero.  This proves
\eqref{eq:wan-Lambda-target}.

\subsubsection{Mishev's Barnes representation and a common kernel}
\label{subsubsec:wan-common-barnes-kernel}

We use Mishev's Barnes representation~\eqref{eq:mishev-barnes}, with Barnes contour $\mathfrak C$.

At the points \(Q_{\varepsilon,\delta}(s)\), one has
\[
e=1,\qquad c=d=\frac12,
\]
so \(\Gamma(1-e-t)\Gamma(-t)=\Gamma(-t)^2\).  Introduce
\begin{equation}
{\mathcal K_s(t):=\frac{\Gamma(s+t)^2\Gamma(\frac12+t)^2\Gamma(-t)^2}{\Gamma(s+\frac12+t)^2}.}
\label{eq:wan-barnes-kernel}
\end{equation}
For convenience, write
\[
u:=s+t.
\]
Substituting $Q_{\varepsilon,\delta}(s)$ into
\eqref{eq:mishev-barnes}, and using the normalization
\eqref{eq:wan-Lambda-def}, gives the common form
\begin{equation}
	\Lambda_{\varepsilon,\delta}(s)
	=
	\frac{1}{\pi^3}\frac{1}{2\pi i}
	\int_{\mathfrak C}
	\mathcal K_s(t)\,
	R_{\varepsilon,\delta}(s,u)\,\dd t,
	\label{eq:wan-barnes-master}
\end{equation}
where
\begin{equation}
	R_{\varepsilon,\delta}(s,u)
	=
	\frac{\Gamma(s+\varepsilon)\Gamma(s+\delta)}
	{\Gamma(s)\Gamma(s+\varepsilon+\delta)}
	\frac{\Gamma(u+\varepsilon+\delta)}{\Gamma(u)}
	\frac{\Gamma(u+\frac12)^2}
	{\Gamma(u+\frac12+\varepsilon)
		\Gamma(u+\frac12+\delta)}.
	\label{eq:wan-barnes-rational-factor}
\end{equation}
Since $\varepsilon,\delta\in\{0,1\}$, the gamma recurrence immediately
reduces these factors to
\[
R_{00}(s,u)=1,
\qquad
R_{10}(s,u)=R_{01}(s,u)=\frac{u}{u+\frac12},
\]
and
\[
R_{11}(s,u)
=
\frac{s}{s+1}
\frac{u(u+1)}{(u+\frac12)^2}.
\]
Consequently,
\begin{equation}
{\Lambda_{00}(s)=\frac{1}{\pi^3}\frac{1}{2\pi i}\int_{\mathfrak C}\mathcal K_s(t)\,\dd t,}
\label{eq:wan-barnes-Lambda00}
\end{equation}
\begin{equation}
{\Lambda_{10}(s)=\frac{1}{\pi^3}\frac{1}{2\pi i}\int_{\mathfrak C}\mathcal K_s(t)\frac{u}{u+\frac12}\,\dd t,}
\label{eq:wan-barnes-Lambda10}
\end{equation}
and
\begin{equation}
{\Lambda_{11}(s)=\frac{1}{\pi^3}\frac{1}{2\pi i}\int_{\mathfrak C}\mathcal K_s(t)\frac{s}{s+1}\frac{u(u+1)}{(u+\frac12)^2}\,\dd t.}
\label{eq:wan-barnes-Lambda11}
\end{equation}

\subsubsection{Shifted kernel relations}
\label{subsubsec:wan-shifted-kernel}

The gamma recurrence gives
\begin{equation}
\frac{\mathcal K_{s+1}(t)}{\mathcal K_s(t)}=\frac{u^2}{(u+\frac12)^2}.
\label{eq:wan-kernel-s-shift}
\end{equation}
Consequently,
\begin{equation}
{\Lambda_{00}(s+1)=\frac{1}{\pi^3}\frac{1}{2\pi i}\int_{\mathfrak C}\mathcal K_s(t)\frac{u^2}{(u+\frac12)^2}\,\dd t.}
\label{eq:wan-barnes-Lambda00-shift}
\end{equation}
Likewise,
\begin{equation}
{\Lambda_{10}(s+1)=\frac{1}{\pi^3}\frac{1}{2\pi i}\int_{\mathfrak C}\mathcal K_s(t)\frac{u^2(u+1)}{(u+\frac12)^2(u+\frac32)}\,\dd t.}
\label{eq:wan-barnes-Lambda10-shift}
\end{equation}
Thus all four quantities in \eqref{eq:wan-Lambda-target} are represented
over the same Barnes kernel $\mathcal K_s(t)$, with the differences
encoded entirely by elementary rational functions of $u=s+t$.
Accordingly, the target relation reduces to a single rational identity
inside the Barnes integral, up to a unit shift in the integration variable.

\subsubsection{The Barnes telescoper}
\label{subsubsec:wan-barnes-telescoper}

Subtract the right-hand side of \eqref{eq:wan-Lambda-target} from the left.
Using \eqref{eq:wan-barnes-Lambda10},
\eqref{eq:wan-barnes-Lambda11},
\eqref{eq:wan-barnes-Lambda00-shift}, and
\eqref{eq:wan-barnes-Lambda10-shift}, the difference is
\begin{equation}
	\frac{1}{\pi^3}\frac{1}{2\pi i}
	\int_{\mathfrak C}
	\mathcal K_s(t)\mathcal E_s(t)\,\dd t,
	\label{eq:wan-barnes-difference-integral}
\end{equation}
where
\begin{equation}
	\begin{aligned}
		\mathcal E_s(t)
		={}&
		\frac{s}{s+1}\frac{u(u+1)}{(u+\frac12)^2}
		-\frac{u^2}{(u+\frac12)^2}
		-2s\frac{u}{u+\frac12}
		\\
		&\quad
		+2(s+1)
		\frac{u^2(u+1)}
		{(u+\frac12)^2(u+\frac32)}.
	\end{aligned}
	\label{eq:wan-defect-factor}
\end{equation}

To construct a unit-shift telescoper, seek a rational function
$\rho_s(t)$ such that
\begin{equation}
	G_s(t):=\mathcal K_s(t)\rho_s(t)
\end{equation}
satisfies
\begin{equation}
	G_s(t+1)-G_s(t)
	=
	\mathcal K_s(t)\mathcal E_s(t).
\end{equation}
Dividing by $\mathcal K_s(t)$ reduces this to the rational difference
equation
\begin{equation}
	\frac{\mathcal K_s(t+1)}{\mathcal K_s(t)}
	\rho_s(t+1)-\rho_s(t)
	=
	\mathcal E_s(t).
\end{equation}

The gamma recurrence gives
\begin{equation}
	\frac{\mathcal K_s(t+1)}{\mathcal K_s(t)}
	=
	\frac{u^2(t+\frac12)^2}
	{(t+1)^2(u+\frac12)^2},
	\qquad u=s+t.
\end{equation}
The pole structure and the factor $(t+1)^{-2}$ suggest taking
\begin{equation}
	\rho_s(t)
	=
	C_s\frac{t^2u}{u+\frac12}.
\end{equation}
Substitution determines $C_s=2/(s+1)$.  Hence
\begin{equation}
	G_s(t)
	=
	\frac{4t^2(s+t)}
	{(s+1)(2s+2t+1)}
	\mathcal K_s(t).
	\label{eq:wan-G-def}
\end{equation}
A direct substitution gives
\begin{equation}
	G_s(t+1)-G_s(t)
	=
	\mathcal K_s(t)\mathcal E_s(t).
	\label{eq:wan-barnes-telescoper}
\end{equation}
Thus the defect does not vanish pointwise; rather, it is a discrete coboundary for the unit-shift operator at the Barnes-integrand level.
\subsubsection{Contour shift and completion of the contiguous relation}
\label{subsubsec:wan-barnes-contour-shift}

Choose a vertical Barnes contour
\begin{equation}
	\mathfrak C:\qquad
	\Re t=c,
	\qquad
	-\min(\Re s,\tfrac12)<c<0.
	\label{eq:wan-barnes-contour}
\end{equation}
The poles of $\Gamma(s+t)^2$ and $\Gamma(\frac12+t)^2$ lie to the left
of $\mathfrak C$, whereas the poles of $\Gamma(-t)^2$ at
$t=0,1,2,\ldots$ lie to the right.

Integrating \eqref{eq:wan-barnes-telescoper} over $\mathfrak C$ gives
\begin{equation}
	\int_{\mathfrak C}\mathcal K_s(t)\mathcal E_s(t)\,\dd t
	=
	\int_{\mathfrak C+1}G_s(t)\,\dd t
	-
	\int_{\mathfrak C}G_s(t)\,\dd t.
\end{equation}
No pole of $G_s$ lies in the strip between $\mathfrak C$ and
$\mathfrak C+1$.  The double pole of $\Gamma(-t)^2$ at $t=0$ is
cancelled by the factor $t^2$ in \eqref{eq:wan-G-def}; the apparent
pole at $s+t=-\frac12$ is cancelled by the double zero of
$1/\Gamma(s+\frac12+t)^2$; and the next pole of $\Gamma(-t)^2$, at
$t=1$, lies strictly to the right of $\mathfrak C+1$.  Stirling's formula gives exponential decay on vertical lines, so the integrals over the horizontal sides of the usual rectangular contour
tend to zero as the height tends to infinity.  Hence
\begin{equation}
	\int_{\mathfrak C}\mathcal K_s(t)\mathcal E_s(t)\,\dd t=0.
	\label{eq:wan-barnes-zero}
\end{equation}
By \eqref{eq:wan-barnes-difference-integral}, this proves
\begin{equation}
	\Lambda_{11}(s)
	=
	\Lambda_{00}(s+1)
	+
	2s\Lambda_{10}(s)
	-
	2(s+1)\Lambda_{10}(s+1).
	\label{eq:wan-Lambda-contiguous}
\end{equation}

\subsubsection{Completion of the Barnes proof}
\label{subsubsec:wan-e2-completion}

Combining \eqref{eq:wan-M11-contiguous},
\eqref{eq:wan-known-moments-Lambda}, and
\eqref{eq:wan-Lambda-contiguous}, we obtain
\begin{equation}
	M_{11}(s)
	=
	\frac{\pi^3}{8}\Lambda_{11}(s).
\end{equation}
Using \eqref{eq:wan-Lambda-11}, this becomes
\begin{equation}
	M_{11}(s)
	=
	\frac{\pi^3}{8}
	\Gamma(s)\Gamma(s+1)^2\Gamma(s+2)
	L(Q_{11}(s)),
	\label{eq:wan-e2-L-gamma}
\end{equation}
This provides an independent derivation of the $E'^2$ moment formula
in Theorem~\ref{thm:e2-direct}.
Equivalently, the normalized contiguous relation
\eqref{eq:wan-Lambda-contiguous} takes the unnormalized form
\begin{equation}
	(s+1)L(Q_{11}(s))
	=
	sL(Q_{00}(s+1))
	+
	2L(Q_{10}(s))
	-
	2s(s+1)^3L(Q_{10}(s+1)).
	\label{eq:wan-L-contiguous-unnormalized}
\end{equation}
Thus the classical elliptic differential system induces a genuine
contiguous relation among neighboring values of Mishev's completed
$L$-function.

\subsection{The $2\times2$ contiguous square}
\label{subsec:parameter-square}

The four points $Q_{\varepsilon,\delta}(s)$ defined in
\eqref{eq:Q-family} form the square
\begin{equation}
	\begin{matrix}
		Q_{00}(s)&\longrightarrow&Q_{10}(s)\\
		\downarrow&&\downarrow\\
		Q_{01}(s)&\longrightarrow&Q_{11}(s).
	\end{matrix}
	\label{eq:contiguous-square}
\end{equation}
The horizontal and vertical moves correspond to replacing one of the two
elliptic factors $K'$ by $E'$.  The exchange
\[
(\varepsilon,\delta)\longleftrightarrow(\delta,\varepsilon)
\]
corresponds on the Mishev side to the symmetry $f\leftrightarrow g$.
Thus $Q_{00}$ and $Q_{11}$ are fixed by this involution, whereas
$Q_{10}$ and $Q_{01}$ form a two-element orbit.  In particular, the
identity $E'K'=K'E'$ is reflected by the elementary Weyl-group symmetry
$f\leftrightarrow g$, which also accounts for the two very-well-poised
realizations of the mixed moment.

On the elliptic side, the three moment families form the quadratic system
\[
K'^2,\qquad E'K',\qquad E'^2,
\]
while on the hypergeometric side they correspond to the vertices of the
$2\times2$ family $Q_{\varepsilon,\delta}(s)$ inside the Saalsch\"utzian
parameter space, modulo the exchange symmetry.  Mishev's completed
$L$-function provides the common normalization, while its collision
structure and $W(D_5)$ symmetry organize the parameter geometry.
Accordingly, the three complementary moment families should be viewed as
different vertices of a single $W(D_5)$-compatible contiguous square.

\section{The BBBG four-Bessel moment}
\label{sec:bbbg}

\subsection{The harmonic sum as a parameter derivative}
\label{subsec:bbbg-derivative}

Bailey, Borwein, Broadhurst, and Glasser
\cite{MR2450513} obtained a harmonic-series representation of the
four-Bessel moment
\begin{equation}
	s_{4,0}
	:=
	\int_0^\infty I_0(t)K_0(t)^3\,\dd t,
	\label{eq:s40-def}
\end{equation}
where $I_0$ and $K_0$ denote the modified Bessel functions of the first
and second kinds, respectively, of order zero.

Set
\begin{equation}
	c_n
	:=
	\left(\frac{(1/2)_n}{n!}\right)^4
	=
	\frac{\binom{2n}{n}^4}{2^{8n}},
\end{equation}
and
\begin{equation}
	b_n
	:=
	\frac{1}{4n+1}
	+
	2\sum_{k=0}^{2n-1}\frac{1}{2k+1},
\end{equation}
with the inner sum understood to be empty when $n=0$.  Their formula is
\begin{equation}
	s_{4,0}
	=
	\frac{\pi^2}{2}
	\sum_{n=0}^\infty c_nb_n.
	\label{eq:BBBG-series}
\end{equation}

Our first step is to realize the harmonic factor $b_n$ as a parameter
derivative.  We then place the resulting deformation in the same completed
$L$-function and $W(D_5)$ framework used for the complementary elliptic
moments.

Define the very-well-poised deformation
\begin{equation}
	\mathcal W_\tau
	:=
	\F76\!\left(
	\begin{matrix}
		\frac12,\frac54,\frac12,\frac12,\frac12,
		\frac14+\tau,\frac34+\tau\\[1mm]
		\frac14,1,1,1,\frac54-\tau,\frac34-\tau
	\end{matrix};1
	\right).
\end{equation}
If $w_n(\tau)$ denotes its $n$th term, then
\begin{equation}
	w_n(0)=c_n.
\end{equation}
Using the digamma increments $D_a(n)$ introduced above, logarithmic
differentiation gives
\begin{equation}
	\left.
	\frac{w_n'(\tau)}{w_n(\tau)}
	\right|_{\tau=0}
	=
	D_{1/4}(n)
	+
	2D_{3/4}(n)
	+
	D_{5/4}(n)
	=
	4(b_n-1).
\end{equation}
Consequently,
\begin{equation}
	\mathcal W_0'
	=
	4\sum_{n=0}^\infty c_n(b_n-1),
\end{equation}
and hence
\begin{equation}
	\sum_{n=0}^\infty c_nb_n
	=
	\mathcal W_0+\frac14\mathcal W_0'.
	\label{eq:bbbg-parameter-derivative}
\end{equation}
Thus the nested odd-harmonic factor is generated by differentiation in
parameter space.

\subsection{A $W(D_5)$ reflection and the quarter-to-half transformation}
\label{subsec:bbbg-reflection}

Consider the balanced path
\begin{equation}
	x_\tau
	=
	\left(
	\frac14-\tau,\frac14-\tau,\frac14-\tau,\frac14+\tau;
	\frac12;
	\frac34-\tau,\frac34-\tau
	\right).
	\label{eq:bbbg-quarter-path}
\end{equation}
Applying the Bailey--Mishev representation
\eqref{eq:bailey-mishev} gives
\begin{equation}
	L(x_\tau)
	=
	P_x(\tau)\mathcal W_\tau,
	\label{eq:bbbg-quarter-L}
\end{equation}
where
\begin{equation}
	P_x(\tau)
	=
	\frac{\Gamma(3/2)}
	{\pi\Gamma(3/4-\tau)\Gamma(5/4-\tau)
		\Gamma(1/2-2\tau)}.
\end{equation}

Applying Mishev's fundamental involution $\mathcal A$
from \eqref{eq:mishev-involution} sends $x_\tau$ to
\begin{equation}
	y_\tau
	=
	\left(
	\frac14-\tau,\frac14-\tau,\frac12,\frac12-2\tau;
	\frac34-\tau;
	1-2\tau,\frac34-\tau
	\right).
	\label{eq:bbbg-reflected-path}
\end{equation}
By the $W(D_5)$ invariance \eqref{eq:mishev-reflection},
\[
L(x_\tau)=L(y_\tau).
\]

Applying the Bailey--Mishev representation\eqref{eq:bailey-mishev} at $y_\tau$, the three
upper/lower parameter pairs
\[
\frac14-\tau,\qquad
\frac54-\tau,\qquad
\frac34-\tau
\]
cancel.  Thus the reflected ${}_7F_6(1)$ reduces to
\begin{equation}
	F_\tau
	:=
	\F43\!\left(
	\begin{matrix}
		\frac12-2\tau,\frac12-2\tau,\frac12,\frac12\\
		1-2\tau,1-2\tau,1
	\end{matrix};1
	\right).
	\label{eq:bbbg-reflected-F}
\end{equation}
The corresponding prefactor is
\begin{equation}
	P_y(\tau)
	=
	\frac{\Gamma(3/2-2\tau)}
	{\pi\Gamma(3/4-\tau)\Gamma(1/2)
		\Gamma(1-2\tau)^2\Gamma(5/4-\tau)}.
\end{equation}
Consequently,
\begin{equation}
	\mathcal W_\tau
	=
	\mathcal R(\tau)F_\tau,
	\label{eq:bbbg-reflection-factorization}
\end{equation}
where
\begin{equation}
	\begin{aligned}
		\mathcal R(\tau)
		&:=
		\frac{P_y(\tau)}{P_x(\tau)}
		\\
		&=
		\frac{
			\Gamma(3/2-2\tau)\Gamma(1/2-2\tau)
		}{
			\Gamma(3/2)\Gamma(1/2)\Gamma(1-2\tau)^2
		}
		\\
		&=
		(1-4\tau)
		\left[
		\frac{\Gamma(1/2-2\tau)}
		{\sqrt{\pi}\,\Gamma(1-2\tau)}
		\right]^2.
	\end{aligned}
\end{equation}
Thus the $W(D_5)$ reflection converts the quarter-parameter deformation
$\mathcal W_\tau$ into the half-parameter ${}_4F_3(1)$ deformation
$F_\tau$, up to the explicit gamma factor $\mathcal R(\tau)$.

At $\tau=0$,
\begin{equation}
	\mathcal R(0)=1,
	\qquad
	F_0
	=
	\F43\!\left(
	\begin{matrix}
		\frac12,\frac12,\frac12,\frac12\\
		1,1,1
	\end{matrix};1
	\right)
	=
	\sum_{n=0}^{\infty}c_n.
\end{equation}
Moreover,
\begin{equation}
	\mathcal R'(0)=8\log2-4.
\end{equation}
The logarithmic derivative of the $n$th term of $F_\tau$ at
$\tau=0$ is
\begin{equation}
	4\bigl(D_1(n)-D_{1/2}(n)\bigr),
\end{equation}
and hence
\begin{equation}
	F_0'
	=
	4\sum_{n=0}^{\infty}
	c_n\bigl(D_1(n)-D_{1/2}(n)\bigr).
\end{equation}

Differentiating \eqref{eq:bbbg-reflection-factorization} at
$\tau=0$ and using \eqref{eq:bbbg-parameter-derivative} gives
\begin{equation}
	\sum_{n=0}^{\infty}c_nb_n
	=
	2\log2\,F_0
	+
	\sum_{n=0}^{\infty}
	c_n\bigl(D_1(n)-D_{1/2}(n)\bigr).
\end{equation}
Since
\begin{equation}
	D_1(n)-D_{1/2}(n)+2\log2
	=
	\psi(n+1)-\psi\!\left(n+\frac12\right),
\end{equation}
we obtain the quarter-to-half harmonic transformation
\begin{equation}
	\sum_{n=0}^{\infty}c_nb_n
	=
	\sum_{n=0}^{\infty}
	c_n
	\left[
	\psi(n+1)-\psi\!\left(n+\frac12\right)
	\right].
	\label{eq:bbbg-quarter-to-half}
\end{equation}

\subsection{The symmetric supplementary limit}
\label{subsec:bbbg-endgame}

It remains to evaluate the half-parameter harmonic sum on the
right-hand side of \eqref{eq:bbbg-quarter-to-half}.  Consider the
balanced path
\begin{equation}
	z_\eps
	=
	\left(
	\frac12+\frac\eps2,
	\frac12+\frac\eps2,
	\frac12+\frac\eps2,
	\frac12+\frac\eps2;
	1+\eps;
	1+\frac\eps2,
	1+\frac\eps2
	\right).
	\label{eq:bbbg-symmetric-path}
\end{equation}
Define
\begin{equation}
	\Phi(\eps)
	:=
	\F43\!\left(
	\begin{matrix}
		\frac12+\frac\eps2,
		\frac12+\frac\eps2,
		\frac12+\frac\eps2,
		\frac12+\frac\eps2\\
		1+\eps,
		1+\frac\eps2,
		1+\frac\eps2
	\end{matrix};1
	\right),
\end{equation}
and set
\begin{equation}
	\mathcal P(\eps)
	:=
	\Gamma(1+\eps)
	\Gamma\!\left(1+\frac\eps2\right)^2
	\Gamma\!\left(\frac12-\frac\eps2\right)^4,
	\qquad
	\widehat{\Phi}(\eps)
	:=
	\frac{\Phi(\eps)}{\mathcal P(\eps)}.
\end{equation}

Substituting \eqref{eq:bbbg-symmetric-path} into Mishev's completed
$L$-function \eqref{eq:mishev-L}, the supplementary $\F43$ branch is
obtained by replacing $\eps$ with $-\eps$.  Since
\[
\sin\pi(1+\eps)=-\sin\pi\eps,
\]
we obtain
\begin{equation}
	L(z_\eps)
	=
	\frac{
		\widehat{\Phi}(-\eps)-\widehat{\Phi}(\eps)
	}{
		\sin\pi\eps
	}.
	\label{eq:bbbg-symmetric-collision}
\end{equation}
Taking the finite limit as $\eps\to0$ gives
\begin{equation}
	L(z_0)
	=
	-\frac2\pi\,\widehat{\Phi}'(0).
	\label{eq:bbbg-Lz0-derivative}
\end{equation}

At $\eps=0$,
\begin{equation}
	\Phi(0)=\sum_{n=0}^{\infty}c_n,
\end{equation}
and logarithmic differentiation of the $n$th term of $\Phi$ gives
\begin{equation}
	2D_{1/2}(n)-2D_1(n).
\end{equation}
Hence
\begin{equation}
	\Phi'(0)
	=
	-2\sum_{n=0}^{\infty}
	c_n\bigl(D_1(n)-D_{1/2}(n)\bigr).
\end{equation}
Moreover,
\begin{equation}
	\mathcal P(0)=\pi^2,
	\qquad
	\frac{\mathcal P'(0)}{\mathcal P(0)}
	=
	4\log2.
\end{equation}
Therefore
\begin{equation}
	\widehat{\Phi}'(0)
	=
	-\frac2{\pi^2}
	\sum_{n=0}^{\infty}
	c_n
	\left[
	D_1(n)-D_{1/2}(n)+2\log2
	\right],
\end{equation}
and \eqref{eq:bbbg-Lz0-derivative} yields
\begin{equation}
	L(z_0)
	=
	\frac4{\pi^3}
	\sum_{n=0}^{\infty}
	c_n
	\left[
	\psi(n+1)-\psi\!\left(n+\frac12\right)
	\right].
	\label{eq:bbbg-Lz0-half}
\end{equation}

Here
\begin{equation}
	z_0
	=
	\left(
	\frac12,\frac12,\frac12,\frac12;
	1;
	1,1
	\right).
	\label{eq:bbbg-z0}
\end{equation}
Applying the Bailey--Mishev representation
\eqref{eq:bailey-mishev} at $z_0$ gives
\begin{equation}
	L(z_0)=\frac1{2\pi}V,
\end{equation}
where
\begin{equation}
	V
	:=
	\F76\!\left(
	\begin{matrix}
		\frac54,
		\frac12,\frac12,\frac12,\frac12,\frac12,\frac12\\[1mm]
		\frac14,
		1,1,1,1,1
	\end{matrix};1
	\right).
	\label{eq:bbbg-V}
\end{equation}
Comparing with \eqref{eq:bbbg-Lz0-half} gives
\begin{equation}
	\sum_{n=0}^{\infty}
	c_n
	\left[
	\psi(n+1)-\psi\!\left(n+\frac12\right)
	\right]
	=
	\frac{\pi^2}{8}V.
	\label{eq:bbbg-half-evaluation}
\end{equation}
The resulting ${}_7F_6(1)$ evaluation of the Bessel moment is known
through the four-step random-walk formulation of Borwein, Straub, and
Wan \cite[Theorem~4]{MR3038778}, together with the elliptic-integral
representation of $s_{4,0}$ in the work of Bailey, Borwein, Broadhurst,
and Glasser \cite{MR2450513}.  The argument above gives a different
derivation directly from the BBBG harmonic series through Mishev's
completed $L$-function and its $W(D_5)$ symmetry.

Combining \eqref{eq:bbbg-quarter-to-half} with
\eqref{eq:bbbg-half-evaluation} proves the following.


\begin{theorem}[Hypergeometric evaluation of the BBBG four-Bessel moment]

	\label{thm:bbbg}
	With
	\begin{equation}
		c_n
		=
		\frac{\binom{2n}{n}^4}{2^{8n}},
		\qquad
		b_n
		=
		\frac1{4n+1}
		+
		2\sum_{k=0}^{2n-1}\frac1{2k+1},
	\end{equation}
	one has
	\begin{equation}
		\sum_{n=0}^{\infty}c_nb_n
		=
		\frac{\pi^2}{8}
		\F76\!\left(
		\begin{matrix}
			\frac54,\frac12,\frac12,\frac12,\frac12,\frac12,\frac12\\[1mm]
			\frac14,1,1,1,1,1
		\end{matrix};1
		\right).
	\end{equation}
	Consequently,
	\begin{equation}
		s_{4,0}
		=
		\frac{\pi^4}{16}
		\F76\!\left(
		\begin{matrix}
			\frac54,\frac12,\frac12,\frac12,\frac12,\frac12,\frac12\\[1mm]
			\frac14,1,1,1,1,1
		\end{matrix};1
		\right).
	\end{equation}
\end{theorem}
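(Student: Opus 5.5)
The theorem is essentially assembled from the two identities already established in this section, so the plan is to chain them and then check the constants. First, the quarter-to-half transformation \eqref{eq:bbbg-quarter-to-half} rewrites $\sum_n c_nb_n$ as the half-parameter harmonic sum $\sum_n c_n[\psi(n+1)-\psi(n+\frac12)]$. Then the symmetric supplementary limit \eqref{eq:bbbg-half-evaluation} evaluates that sum as $\frac{\pi^2}{8}V$, with $V$ the ${}_7F_6(1)$ in \eqref{eq:bbbg-V}. Composing the two gives the first displayed identity. The second follows by substituting it into the BBBG formula \eqref{eq:BBBG-series}: $s_{4,0}=\frac{\pi^2}{2}\cdot\frac{\pi^2}{8}V=\frac{\pi^4}{16}V$.

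Since the argument is a composition, I would spend the proof re-verifying the three places where constants enter.

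First, in \eqref{eq:bbbg-quarter-to-half}, I would check:
\begin{itemize}
\item that $\mathcal R(0)=1$;
\item that $\mathcal R'(0)=8\log 2-4$, which follows from $\psi(\frac12)-\psi(1)=-2\log2$ together with the factor $(1-4\tau)$;
\item that differentiating $\mathcal W_\tau=\mathcal R(\tau)F_\tau$ combines with $\mathcal W_0+\frac14\mathcal W_0'$ so that the $-4$ cancels the constant $1$ coming from $b_n-1$.
\end{itemize}

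Second, in the symmetric collision, I would confirm that $\mathcal P(0)=\pi^2$ and $\mathcal P'(0)/\mathcal P(0)=4\log2$. I would also confirm the sign in $L(z_0)=-\frac{2}{\pi}\widehat\Phi'(0)$, which comes from $\sin\pi(1+\eps)=-\sin\pi\eps$.

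Third, in the Bailey--Mishev prefactor at $z_0$, the parameter is $\alpha=d+g-e=\frac12$. I would check that the pair $(1+d-e,\,1+g-e)=(\frac12,1)$ together with the remaining parameters yields exactly $V$ and the factor $\frac{1}{2\pi}$. The gamma prefactor should be $\Gamma(\frac32)/(\pi\Gamma(1)^2\Gamma(\frac12)^4)=\frac{1}{2\pi^2}\cdot\frac{\sqrt\pi}{\sqrt\pi}$. I would recompute this factor carefully, since it determines the final $\pi^2/8$.

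I also need to justify the analytic steps. The first is interchanging differentiation in $\tau$ and $\eps$ with summation. The terms $c_n\sim(\pi n)^{-2}$, and the log-derivative factors grow like $O(\log n)$, so the series converge uniformly near the parameter origin; the Pochhammer perturbations only change the exponent by $O(|\tau|)$, which preserves domination. The second is that the $W(D_5)$ relation $L(x_\tau)=L(y_\tau)$ holds on an open set of $\tau$ where both ${}_7F_6$ series converge. Since both sides are analytic in $\tau$ there, differentiating at $\tau=0$ is legitimate.

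I expect the main obstacle to be this constant bookkeeping rather than any structural step, especially the normalization of $L(z_0)$ via \eqref{eq:bailey-mishev}. If a discrepancy appears there, I would cross-check numerically against the known value $s_{4,0}\approx 0.62682$ (I recall this approximately) and against the Borwein--Straub--Wan evaluation \cite[Theorem~4]{MR3038778}.
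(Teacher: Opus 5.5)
Your proposal is correct and follows the paper's proof exactly: compose \eqref{eq:bbbg-quarter-to-half} with \eqref{eq:bbbg-half-evaluation} to get the first identity, then multiply by $\pi^2/2$ via \eqref{eq:BBBG-series}. Your own constant check is wrong as written, however, and would mislead you: at $z_0$ the Bailey--Mishev denominator $\pi\Gamma(g)\Gamma(1+g-e)\Gamma(f-d)\Gamma(1+a+d-e)\Gamma(1+b+d-e)\Gamma(1+c+d-e)$ equals $\pi\Gamma(\frac12)$, since only $\Gamma(f-d)$ is a $\Gamma(\frac12)$ and the other five factors are $\Gamma(1)$. So the prefactor is $\Gamma(\frac32)/(\pi\Gamma(\frac12))=\frac{1}{2\pi}$, not $\frac{1}{2\pi^2}$ (which would give $\frac{\pi}{8}V$). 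Likewise, a numerical cross-check must use $s_{4,0}\approx 6.998$ rather than $0.627$: indeed $I_0\ge1$ and $K_0(t)>-\log t$ on $(0,1]$ already force $s_{4,0}\ge\int_0^1(-\log t)^3\,\dd t=6$.
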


The preceding proof uses the $W(D_5)$ structure in two complementary
ways.  First, Mishev's fundamental involution transports the BBBG
deformation to a cancellation locus, reducing the very-well-poised
${}_7F_6(1)$ derivative to a ${}_4F_3(1)$ problem.  Second, the resulting
half-parameter problem is evaluated at the symmetric point $z_0$ in
\eqref{eq:bbbg-z0}, where the two supplementary ${}_4F_3(1)$ branches
coalesce, while the Bailey--Mishev representation identifies the same
value $L(z_0)$ with the symmetric ${}_7F_6(1)$.  Thus $z_0$ is
simultaneously a collision point for the supplementary representation
and the central symmetric point of the very-well-poised representation.


\begin{remark}[The symmetric point in the $K'^2$ family]
	The same symmetric point also occurs in the complementary $K'^2$ family.
	Indeed, by \eqref{eq:wan-q-path} and \eqref{eq:wan-p-point},
	\[
	\mathfrak q_{1/2}
	=
	\mathfrak{p}_{1/2}
	=
	z_0.
	\]
	Thus the point governing the BBBG supplementary collision is also the
	special member of the $K'^2$ family at which the original parameter point
	and its $W(D_5)$-reflected image coincide.
\end{remark}

Since $z_0$ is also fixed by the $W(D_5)$ action, it is natural to ask
what further information is encoded in the induced linear action on
the tangent space $T_{z_0}\mathcal H$.  This leads to the eigenpath
selection principle developed below.

\section{Symmetry-adapted paths and Taylor selection}
\label{sec:eigenpath}

The preceding arguments use the $W(D_5)$ symmetry globally, through
reflections, collisions, and parameter transformations.  At a point fixed
by the Weyl-group action, the same symmetry has a local manifestation on
the tangent space.  Eigenvectors of the induced linear action determine
distinguished parameter paths along which the Taylor expansion of the
completed $L$-function is constrained by symmetry.

Let $\mathcal H$ denote the balanced affine hyperplane, let
$w\in W(D_5)$ fix a point $x_0\in\mathcal H$, and let
$Dw_{x_0}$ denote the induced linear map on $T_{x_0}\mathcal H$.

\begin{proposition}[Eigenpath selection principle]
	\label{prop:eigenpath}
	Suppose $v\in T_{x_0}\mathcal H$ satisfies
	\[
	Dw_{x_0}(v)=\lambda v,
	\]
	where $\lambda$ is a root of unity.  Let $x(t)=x_0+tv$ and suppose
	$L(x(t))$ is analytic near $t=0$.  Then
	\begin{equation}
		L(x(t))=L(x(\lambda t)).
	\end{equation}
	Consequently, if
	\[
	L(x(t))=\sum_{m\ge0}a_mt^m,
	\]
	then
	\begin{equation}
		(1-\lambda^m)a_m=0.
	\end{equation}
	In particular, $a_m=0$ whenever $\lambda^m\ne1$.
\end{proposition}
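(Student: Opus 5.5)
The plan is to use only two facts: the $W(D_5)$ invariance $L(w p)=L(p)$ from \eqref{eq:mishev-reflection}, and the affine nature of the generators. Each generator acts by an affine map of the parameter space that preserves the balanced hyperplane $\mathcal H$. This holds for Mishev's involution $\mathcal A$ in \eqref{eq:mishev-involution}, for the permutations of $a,b,c,d$, and for $f\leftrightarrow g$. Hence every $w\in W(D_5)$ is an affine self-map of $\mathcal H$, of the form $w(x)=w(x_0)+Dw_{x_0}(x-x_0)$, with linear part $Dw_{x_0}$ acting on $T_{x_0}\mathcal H$. I will first record this explicitly: the maps are affine in the coordinates $(a,b,c,d,e,f,g)$, and composition of affine maps is affine.

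Next comes the key step. Since $w(x_0)=x_0$ and $Dw_{x_0}v=\lambda v$,
\[
w(x(t))=x_0+t\,Dw_{x_0}v=x_0+\lambda t v=x(\lambda t).
\]
Invariance then gives $L(x(t))=L(w(x(t)))=L(x(\lambda t))$. This is an identity first for real $t$ near $0$, and then, with complex $t$, wherever both sides are defined.

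The main subtlety is that $\lambda$ may be non-real, so that $x(t)$ has complex coordinates even for real $t$. In that case $L$ must be understood through its meromorphic continuation in the parameters, and the invariance must hold as an identity of meromorphic functions on (the complexification of) $\mathcal H$. I would argue this step as follows. Mishev's relation holds on an open real set of generic parameters. Both sides are meromorphic, so the identity extends to all complex points by the identity theorem. The analyticity hypothesis on $L(x(t))$ near $t=0$ then makes both $t\mapsto L(x(t))$ and $t\mapsto L(x(\lambda t))$ analytic on a small disc, with $|\lambda|=1$ keeping $\lambda t$ in the same disc. The functional identity therefore holds on that disc, after removing removable singularities if necessary.

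Finally, I compare Taylor coefficients. Write $L(x(t))=\sum a_m t^m$; then $L(x(\lambda t))=\sum a_m\lambda^m t^m$. Uniqueness of power series coefficients gives $a_m=\lambda^m a_m$, which is $(1-\lambda^m)a_m=0$. In particular $a_m=0$ whenever $\lambda^m\neq1$. If $\lambda$ has order $r$, this says $a_m=0$ for $r\nmid m$.
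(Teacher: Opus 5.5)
Your proof is correct and follows the paper's argument. The affine action, with $w(x_0)=x_0$ and $Dw_{x_0}v=\lambda v$, gives $w(x(t))=x(\lambda t)$; invariance then yields $L(x(t))=L(x(\lambda t))$, and comparing Taylor coefficients gives $(1-\lambda^m)a_m=0$. Your extra remarks fill in details the paper leaves implicit: you extend the invariance to complex parameters by meromorphic continuation, and you use $|\lambda|=1$ so that $\lambda t$ stays inside the disc of analyticity.
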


\begin{proof}
	Since the action is affine and $w(x_0)=x_0$,
	\[
	w(x_0+tv)
	=
	x_0+t\,Dw_{x_0}(v)
	=
	x_0+\lambda tv
	=
	x(\lambda t).
	\]
	The $W(D_5)$ invariance of $L$ therefore gives
	\[
	L(x(t))
	=
	L(w(x(t)))
	=
	L(x(\lambda t)).
	\]
	Comparison of Taylor coefficients proves the claim.
\end{proof}

Thus an eigenvalue $-1$ forces all odd Taylor coefficients to vanish;
an eigenvalue $i$ permits only degrees divisible by $4$; and a primitive
eighth root permits only degrees divisible by $8$.
\subsection{A genuine Coxeter element at the symmetric point}

At the symmetric point $z_0$ defined in \eqref{eq:bbbg-z0}, consider
Mishev's fundamental involution $\mathcal A$ together with the parameter
permutations
\begin{equation}
	\cC:=(123)(67)\mathcal A.
	\label{eq:coxeter-element}
\end{equation}
In coordinates, one convenient realization is
\begin{equation}
	\cC(a,b,c,d;e;f,g)
	=
	(g-c,a,b,g-d;1+a+b-f;g,1+a+b-e).
	\label{eq:coxeter-action}
\end{equation}
The point $z_0$ is fixed by $\cC$.  Since $\cC$ is affine, its
differential
\[
D\cC_{z_0}:T_{z_0}\mathcal H\longrightarrow T_{z_0}\mathcal H
\]
is the induced linear action on the six-dimensional balanced tangent
space.  Its characteristic polynomial is
\begin{equation}
	(\lambda-1)(\lambda+1)(\lambda^4+1).
	\label{eq:coxeter-charpoly}
\end{equation}
Thus primitive eighth roots occur naturally, as expected for a Coxeter
element of type $D_5$, whose Coxeter number is $8$.

Let
\begin{equation}
	\omega=e^{\pi i/4}.
\end{equation}
A corresponding $\omega$-eigenvector is
\begin{equation}
	v_\omega
	=
	\left(
	\omega^2,\omega,1,1-\omega+\omega^2;
	1+\omega^2;
	\omega^2-\omega^3,1+\omega^3
	\right).
	\label{eq:coxeter-eigenvector}
\end{equation}
A direct substitution gives
\begin{equation}
	D\cC_{z_0}(v_\omega)=\omega v_\omega.
	\label{eq:coxeter-eigenvalue}
\end{equation}

Define
\begin{equation}
	x_\omega(t):=z_0+t v_\omega,
	\qquad
	\cL_\omega(t):=L(x_\omega(t)).
\end{equation}
Since $\cC$ is affine and fixes $z_0$,
\begin{equation}
	\cC x_\omega(t)=x_\omega(\omega t),
	\qquad
	\cL_\omega(t)=\cL_\omega(\omega t).
\end{equation}
By Proposition~\ref{prop:eigenpath},
\begin{equation}
	\cL_\omega^{(m)}(0)=0,
	\qquad
	(m\ge1,\;8\nmid m).
	\label{eq:coxeter-vanishing}
\end{equation}
These symmetry-forced vanishings are the invariant-theoretic source of
the identities developed below.
\subsection{The $W(D_5)$-fixed line through the BBBG point}
\label{sec:D5-fixed-line}

The Coxeter construction above is centered at the symmetric point
$z_0$.  The same symmetry persists along a one-parameter family.  Indeed,
the $D_5$-coordinate description shows that the full Weyl group fixes the
affine line
\begin{equation}
	z_s=(s,s,s,s;1;2s,2s).
	\label{eq:D5-fixed-line}
\end{equation}
In particular,
\[
z_{1/2}=z_0.
\]

At $z_s$, the Bailey--Mishev representation becomes
\begin{equation}
	{}_7F_6\!\left(
	\begin{matrix}
		3s-1,\frac{3s+1}{2},s,s,s,s,s\\
		\frac{3s-1}{2},2s,2s,2s,2s,2s
	\end{matrix};1
	\right)
	=
	\sum_{n=0}^{\infty}q_n(s),
	\label{eq:D5-fixed-series}
\end{equation}
where
\begin{equation}
	q_n(s)
	:=
	\frac{2n+3s-1}{3s-1}
	\frac{(3s-1)_n(s)_n^5}{(2s)_n^5\,n!}.
	\label{eq:D5-fixed-qn}
\end{equation}

Because the $W(D_5)$ action is affine, its linear part is independent
of the base point.  Hence, under the natural identification of the tangent
spaces of $\mathcal H$,
\[
D\cC_{z_s}=D\cC_{z_0}.
\]
Therefore the Coxeter eigenvector $v_\omega$ found at $z_0$ remains an
$\omega$-eigenvector along the entire fixed line.
Applying Proposition~\ref{prop:eigenpath} to the first Taylor coefficient
therefore gives the one-parameter identity
\begin{equation}
	\sum_{n=0}^{\infty}q_n(s)G_n(s)=0,
	\label{eq:D5-fixed-first-identity}
\end{equation}
where
\begin{equation}
	G_n(s)
	=
	\psi(n+3s-1)
	+3\psi(n+s)
	-2\psi(n+2s)
	-2\psi(s)
	+\frac{1}{2n+3s-1}.
	\label{eq:D5-fixed-Gn}
\end{equation}

At $s=\frac12$,
\begin{equation}
	q_n\!\left(\frac12\right)
	=
	(4n+1)
	\left(\frac{(1/2)_n}{n!}\right)^6,
\end{equation}
so \eqref{eq:D5-fixed-first-identity} reduces to the first BBBG-type
harmonic identity at the symmetric point.  At $s=1$,
\begin{equation}
	q_n(1)=\frac{1}{(n+1)^3},
	\label{eq:D5-fixed-s1-weight}
\end{equation}
and \eqref{eq:D5-fixed-first-identity} becomes Euler's classical evaluation
\begin{equation}
	\sum_{k=1}^{\infty}\frac{H_k}{k^3}
	=
	\frac54\zeta(4).
	\label{eq:Euler-Hk-k3}
\end{equation}
Higher Taylor coefficients yield further one-parameter identities along
the fixed line, but their harmonic structure rapidly becomes more
complicated.  We therefore restrict here to the first-order relation,
which already interpolates between the BBBG central-binomial weight at
$s=\frac12$ and the classical Euler weight at $s=1$.
\begin{remark}
Further symmetry-enhanced points arising
	from the mixed $E'K'$ and $E'^2$ moment families also admit finite-order
	eigenpaths and corresponding Taylor-selection identities.  Since these
	additional identities are not needed for the main development, we do not
	record them here.
\end{remark}

\subsection{Arithmetic consequences of the Coxeter eigenpath}
\label{sec:arithmetic-bbbg}
We now extract arithmetic consequences of the symmetry-forced Taylor
vanishing along the Coxeter eigenpath.  We work out the first three orders
in detail, both to exhibit the resulting harmonic sums and to make the
general mechanism transparent.
Recall the eigenpath
\[
x_\omega(t)=z_0+t v_\omega,
\qquad
\cL_\omega(t):=L(x_\omega(t)).
\]
By Proposition~\ref{prop:eigenpath},
\begin{equation}
	\cL_\omega^{(m)}(0)=0
	\qquad
	(m\ge1,\;8\nmid m).
	\label{eq:arithmetic-coxeter-vanishing}
\end{equation}

At $t=0$, the hypergeometric factor is the symmetric series $V$
defined in \eqref{eq:bbbg-V}.  Specializing the fixed-line weight
\eqref{eq:D5-fixed-qn} at $s=\frac12$, write
\begin{equation}
	q_n
	:=
	q_n\!\left(\frac12\right)
	=
	(4n+1)
	\left(\frac{(1/2)_n}{n!}\right)^6
	=
	(4n+1)\frac{\binom{2n}{n}^6}{2^{12n}}.
	\label{eq:bbbg-qn}
\end{equation}
Then
\begin{equation}
	V=\sum_{n=0}^{\infty}q_n.
	\label{eq:bbbg-V-series}
\end{equation}

Let $T_n(t)$ denote the $n$th hypergeometric term in the
Bailey--Mishev representation of $\cL_\omega(t)$, and let
$P_\omega(t)$ denote the corresponding gamma prefactor.  Thus
\begin{equation}
	\cL_\omega(t)
	=
	P_\omega(t)\sum_{n=0}^{\infty}T_n(t),
	\qquad
	T_n(0)=q_n.
	\label{eq:bbbg-eigenpath-series}
\end{equation}
It is convenient to normalize the completed summand by
\begin{equation}
	h_n(t)
	:=
	\frac{P_\omega(t)T_n(t)}
	{P_\omega(0)q_n},
	\label{eq:bbbg-hn-def}
\end{equation}
so that $h_n(0)=1$ and
\begin{equation}
	\cL_\omega(t)
	=
	P_\omega(0)
	\sum_{n=0}^{\infty}q_nh_n(t).
	\label{eq:bbbg-normalized-series}
\end{equation}
Write
\begin{equation}
	\ell_n(t):=\log h_n(t).
	\label{eq:bbbg-elln-def}
\end{equation}

The advantage of passing to $\ell_n$ is that logarithmic
differentiation separates the gamma factors from the Pochhammer
factors.  If a parameter varies linearly as
\[
u(t)=u+\sigma t,
\]
then
\begin{equation}
	\left.
	\frac{d}{dt}\log\Gamma(u(t))
	\right|_{t=0}
	=
	\sigma\psi(u),
\end{equation}
whereas
\begin{equation}
	\left.
	\frac{d}{dt}\log (u(t))_n
	\right|_{t=0}
	=
	\sigma D_u(n),
\end{equation}
where $D_u(n)=\psi(u+n)-\psi(u)$ is the digamma increment introduced
earlier.  Thus gamma factors contribute ordinary digamma values, while
Pochhammer factors contribute finite harmonic sums.

\subsubsection{The first vanishing Taylor coefficient}

Set
\begin{equation}
	\rho:=1-\sqrt2.
\end{equation}
Since
\begin{equation}
	\ell_n(t)
	=
	\log\frac{P_\omega(t)}{P_\omega(0)}
	+
	\log\frac{T_n(t)}{T_n(0)},
\end{equation}
we have
\begin{equation}
	\ell_n'(0)
	=
	\left.
	\frac{d}{dt}\log P_\omega(t)
	\right|_{t=0}
	+
	\left.
	\frac{d}{dt}\log T_n(t)
	\right|_{t=0}.
	\label{eq:ell-first-split}
\end{equation}

We first consider the hypergeometric term.  Write
\begin{equation}
	v_\omega
	=
	(v_a,v_b,v_c,v_d;v_e;v_f,v_g),
\end{equation}
so that the components of $v_\omega$ are the slopes of the original
parameters $a,\ldots,g$ along the eigenpath
$x_\omega(t)=z_0+t v_\omega$.

Each Pochhammer parameter in the Bailey--Mishev ${}_7F_6$ is an
affine-linear combination of these original parameters.  Thus, if
\[
u(t)=u+\sigma t,
\]
then
\begin{equation}
	\left.
	\frac{d}{dt}\log (u(t))_n
	\right|_{t=0}
	=
	\sigma D_u(n),
\end{equation}
where $D_u(n)$ is the digamma increment introduced in
\eqref{eq:digamma-increment}.  Hence the value $u=u(0)$ determines the harmonic
increment, while the slope $\sigma$ determines its coefficient.

Using the parameterization in \eqref{eq:bailey-mishev}, recall that
\begin{equation}
	\alpha=d+g-e.
\end{equation}
Along the eigenpath,
\begin{equation}
	\alpha(0)=\frac12,
	\qquad
	\alpha'(0)
	=
	v_d+v_g-v_e
	=
	1-\omega+\omega^3
	=
	\rho.
\end{equation}
Similarly, $g-a$ has base value $1/2$ and slope $v_g-v_a$, while
$1+d-e$ has base value $1/2$ and slope $v_d-v_e$.

Grouping the Pochhammer parameters according to their values at $t=0$,
the six numerator parameters based at $1/2$ have total slope $4\rho$,
while the five denominator parameters based at $1$ have total slope
$2\rho$.  The remaining numerator and denominator parameters,
$1+\alpha/2$ and $\alpha/2$, are based at $5/4$ and $1/4$ and both
have slope $\rho/2$.  Therefore
\begin{equation}
	\left.
	\frac{d}{dt}\log T_n(t)
	\right|_{t=0}
	=
	4\rho D_{1/2}(n)
	-
	2\rho D_1(n)
	+
	\frac{\rho}{2}
	\bigl(
	D_{5/4}(n)-D_{1/4}(n)
	\bigr).
\end{equation}
Using
\begin{equation}
	D_{5/4}(n)-D_{1/4}(n)
	=
	-4+\frac{4}{4n+1},
\end{equation}
we obtain
\begin{equation}
	\left.
	\frac{d}{dt}\log T_n(t)
	\right|_{t=0}
	=
	\rho
	\left(
	4D_{1/2}(n)
	-
	2D_1(n)
	-
	2
	+
	\frac{2}{4n+1}
	\right).
	\label{eq:T-first-derivative}
\end{equation}

The gamma prefactor is treated in the same way.  At $t=0$, its numerator
gamma factor has argument $3/2$ and slope $\rho$.  Among the denominator
factors, one has argument $1/2$ and slope $-\rho$, while the remaining
five have argument $1$ and total slope $2\rho$.  Hence
\begin{equation}
	\left.
	\frac{d}{dt}\log P_\omega(t)
	\right|_{t=0}
	=
	\rho
	\left[
	\psi\!\left(\frac32\right)
	+
	\psi\!\left(\frac12\right)
	-
	2\psi(1)
	\right].
\end{equation}
Using
\[
\psi\!\left(\frac32\right)
=
\psi\!\left(\frac12\right)+2,
\qquad
\psi(1)-\psi\!\left(\frac12\right)=2\log2,
\]
this simplifies to
\begin{equation}
	\left.
	\frac{d}{dt}\log P_\omega(t)
	\right|_{t=0}
	=
	\rho(2-4\log2).
	\label{eq:P-first-derivative}
\end{equation}

Adding \eqref{eq:T-first-derivative} and
\eqref{eq:P-first-derivative}, the constant terms cancel and give
\begin{equation}
	\ell_n'(0)
	=
	2\rho
	\left(
	2D_{1/2}(n)
	-
	D_1(n)
	-
	2\log2
	+
	\frac{1}{4n+1}
	\right).
\end{equation}
Since
\[
D_1(n)=H_n,
\qquad
D_{1/2}(n)=2H_{2n}-H_n,
\]
we obtain
\begin{equation}
	\ell_n'(0)=2\rho A_n,
\end{equation}
where
\begin{equation}
	A_n
	:=
	4H_{2n}
	-
	3H_n
	-
	2\log2
	+
	\frac{1}{4n+1}.
	\label{eq:def-An}
\end{equation}

Since $h_n(0)=1$,
\begin{equation}
	h_n'(0)=\ell_n'(0)=2\rho A_n.
\end{equation}
Differentiating \eqref{eq:bbbg-normalized-series} at $t=0$ gives
\begin{equation}
	\cL_\omega'(0)
	=
	2\rho\,P_\omega(0)
	\sum_{n=0}^{\infty}q_nA_n.
\end{equation}
By \eqref{eq:arithmetic-coxeter-vanishing},
\[
\cL_\omega'(0)=0.
\]
Since $\rho\ne0$ and $P_\omega(0)\ne0$, we obtain the first harmonic
identity.

\begin{proposition}
	\label{prop:first-harmonic-identity}
	With $q_n$ defined in \eqref{eq:bbbg-qn} and $A_n$ defined in
	\eqref{eq:def-An}, one has
	\begin{equation}
		\sum_{n=0}^{\infty}q_nA_n=0.
	\end{equation}
\end{proposition}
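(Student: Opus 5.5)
The plan is to read the identity off the first Taylor coefficient of $\cL_\omega(t)$ along the Coxeter eigenpath. By \eqref{eq:arithmetic-coxeter-vanishing} with $m=1$, we have $\cL_\omega'(0)=0$, since $8\nmid 1$. This relies on Proposition~\ref{prop:eigenpath} applied to the Coxeter element $\cC$ of \eqref{eq:coxeter-element}, which fixes $z_0$, and to the eigenvector $v_\omega$ with eigenvalue $\omega=e^{\pi i/4}$ from \eqref{eq:coxeter-eigenvalue}. Analyticity of $\cL_\omega$ near $t=0$ comes from the Bailey--Mishev representation \eqref{eq:bailey-mishev}. At $t=0$ all gamma arguments are $3/2$, $1/2$ or $1$, so $P_\omega(0)\neq0$ is finite. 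The ${}_7F_6(1)$ converges absolutely and locally uniformly near $t=0$, because the excess of lower over upper parameters at $z_0$ is $2>1$. Hence termwise differentiation of \eqref{eq:bbbg-normalized-series} is legitimate.

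Next I compute $h_n'(0)=\ell_n'(0)$ by splitting into the Pochhammer part $T_n$ and the gamma prefactor $P_\omega$, as in \eqref{eq:ell-first-split}.

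For the Pochhammer part, I list the slopes of the seven upper and six lower parameters in \eqref{eq:bailey-mishev} as affine combinations of the components of $v_\omega$ in \eqref{eq:coxeter-eigenvector}, and group them by base value.
\begin{itemize}
\item The six upper parameters based at $1/2$ are $\alpha$, $g-a$, $g-b$, $g-c$, $d$ and $1+d-e$.
\item The five lower parameters based at $1$ are $1+a+d-e$, $1+b+d-e$, $1+c+d-e$, $1+g-e$ and $g$.
\item The remaining pair is $1+\alpha/2$ (upper) and $\alpha/2$ (lower), based at $5/4$ and $1/4$.
\end{itemize}
Using $\omega^4=-1$, the total slopes should simplify to $4\rho$, $2\rho$ and $\rho/2$ respectively, with $\rho=1-\omega+\omega^3=1-\sqrt2$. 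Combining this with $D_{5/4}(n)-D_{1/4}(n)=-4+4/(4n+1)$ gives \eqref{eq:T-first-derivative}.

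For the gamma prefactor, the same bookkeeping applied to $\Gamma(1+d+g-e)$ and the six denominator gammas gives \eqref{eq:P-first-derivative}. Here I use $\psi(3/2)=\psi(1/2)+2$ and $\psi(1)-\psi(1/2)=2\log2$.

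Adding the two contributions, the constants $\mp2\rho$ cancel. Rewriting with $D_1(n)=H_n$ and $D_{1/2}(n)=2H_{2n}-H_n$ gives $\ell_n'(0)=2\rho A_n$. Then
\[
\cL_\omega'(0)=2\rho P_\omega(0)\sum_n q_nA_n=0,
\]
and dividing by $2\rho P_\omega(0)\neq0$ yields the proposition. Convergence of $\sum q_nA_n$ is clear, since $q_n=O(n^{-2})$ and $A_n=O(\log n)$.

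The main obstacle I expect is the slope bookkeeping. I need to confirm that each grouped total is a real multiple of the same $\rho$, even though the individual slopes are complex combinations of powers of $\omega$. This requires the relations $\omega^4=-1$, $\omega+\omega^7=\sqrt2$, and similar identities, together with the fact that every parameter is a genuine affine function of $(a,\dots,g)$ on $\mathcal H$. I would first verify that $v_\omega$ is tangent, i.e. satisfies $v_e+v_f+v_g=v_a+v_b+v_c+v_d$, and then check each group sum one at a time. A secondary check is that the first-order coefficient genuinely has no other terms beyond those listed, which holds because $\ell_n$ is a sum of log-gammas and log-Pochhammers.
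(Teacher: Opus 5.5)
Your proposal is correct and follows the paper's proof essentially step for step. Both arguments get $\cL_\omega'(0)=0$ from the Coxeter eigenpath and split $\ell_n'(0)$ into Pochhammer and gamma-prefactor contributions with grouped slopes $4\rho$, $2\rho$ and $\rho/2$; the constants $\mp 2\rho$ then cancel to give $\ell_n'(0)=2\rho A_n$. Your extra checks on tangency of $v_\omega$, local uniform convergence for termwise differentiation, and the slope sums are details the paper leaves implicit, and they do verify.
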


This is the simplest arithmetic consequence of the Coxeter eigenpath.
The harmonic factor arises directly from logarithmic differentiation of
the completed hypergeometric summand.
\subsubsection{The second vanishing Taylor coefficient}

The second derivative illustrates the next feature of the method:
although logarithmic derivatives remain linear in generalized harmonic
numbers, derivatives of $h_n=e^{\ell_n}$ produce nonlinear
combinations of them.

Define
\begin{equation}
	D_a^{(2)}(n)
	:=
	\sum_{j=0}^{n-1}\frac{1}{(a+j)^2}.
	\label{eq:second-digamma-increment}
\end{equation}
If
\[
u(t)=u+\sigma t,
\]
then
\begin{equation}
	\left.
	\frac{d^2}{dt^2}\log (u(t))_n
	\right|_{t=0}
	=
	-\sigma^2D_u^{(2)}(n).
\end{equation}
A denominator Pochhammer factor contributes with the opposite sign.

Using the same parameter slopes as above, the second logarithmic
derivative of the hypergeometric term is
\begin{equation}
	\left.
	\frac{d^2}{dt^2}\log T_n(t)
	\right|_{t=0}
	=
	-2\rho^2D_{1/2}^{(2)}(n)
	+
	\frac{\rho^2}{4}
	\left(
	D_{1/4}^{(2)}(n)-D_{5/4}^{(2)}(n)
	\right).
\end{equation}
The shift relation
\begin{equation}
	D_{5/4}^{(2)}(n)
	=
	D_{1/4}^{(2)}(n)
	-
	16
	+
	\frac{16}{(4n+1)^2}
\end{equation}
therefore gives
\begin{equation}
	\left.
	\frac{d^2}{dt^2}\log T_n(t)
	\right|_{t=0}
	=
	-2\rho^2D_{1/2}^{(2)}(n)
	+
	4\rho^2
	-
	\frac{4\rho^2}{(4n+1)^2}.
	\label{eq:T-second-derivative}
\end{equation}

For the gamma prefactor, the same slope calculation gives
\begin{equation}
	\left.
	\frac{d^2}{dt^2}\log P_\omega(t)
	\right|_{t=0}
	=
	\rho^2
	\left[
	\psi^{(1)}\!\left(\frac32\right)
	-
	\psi^{(1)}\!\left(\frac12\right)
	\right].
\end{equation}
Using the trigamma recurrence
\[
\psi^{(1)}\!\left(\frac32\right)
=
\psi^{(1)}\!\left(\frac12\right)-4,
\]
we obtain
\begin{equation}
	\left.
	\frac{d^2}{dt^2}\log P_\omega(t)
	\right|_{t=0}
	=
	-4\rho^2.
	\label{eq:P-second-derivative}
\end{equation}

Adding \eqref{eq:T-second-derivative} and
\eqref{eq:P-second-derivative}, the constant terms again cancel:
\begin{equation}
	\ell_n''(0)
	=
	-2\rho^2
	\left(
	D_{1/2}^{(2)}(n)
	+
	\frac{2}{(4n+1)^2}
	\right).
\end{equation}

For $r\ge1$, write
\begin{equation}
	H_n^{(r)}
	:=
	\sum_{k=1}^n\frac{1}{k^r}.
	\label{eq:generalized-harmonic}
\end{equation}
Since
\begin{equation}
	D_{1/2}^{(2)}(n)
	=
	4H_{2n}^{(2)}-H_n^{(2)},
\end{equation}
we obtain
\begin{equation}
	\ell_n''(0)
	=
	-2\rho^2R_n,
\end{equation}
where
\begin{equation}
	R_n
	:=
	4H_{2n}^{(2)}
	-
	H_n^{(2)}
	+
	\frac{2}{(4n+1)^2}.
	\label{eq:def-Bn}
\end{equation}

We now return from $\ell_n$ to $h_n=e^{\ell_n}$.  Since
\begin{equation}
	h_n''(0)
	=
	\ell_n'(0)^2+\ell_n''(0),
\end{equation}
and $\ell_n'(0)=2\rho A_n$, we find
\begin{equation}
	h_n''(0)
	=
	4\rho^2
	\left(
	A_n^2-\frac12R_n
	\right).
\end{equation}
Define
\begin{equation}
	X_n
	:=
	A_n^2-\frac12R_n.
	\label{eq:def-Xn}
\end{equation}
Thus
\begin{equation}
	h_n''(0)=4\rho^2X_n.
\end{equation}

Differentiating \eqref{eq:bbbg-normalized-series} twice at $t=0$ gives
\begin{equation}
	\cL_\omega''(0)
	=
	4\rho^2P_\omega(0)
	\sum_{n=0}^{\infty}q_nX_n.
\end{equation}
By \eqref{eq:arithmetic-coxeter-vanishing},
\[
\cL_\omega''(0)=0.
\]
Since $\rho\ne0$ and $P_\omega(0)\ne0$, we obtain the second harmonic
identity.

\begin{proposition}
	\label{prop:second-harmonic-identity}
	With $q_n$, $A_n$, $R_n$, and $X_n$ defined in
	\eqref{eq:bbbg-qn}, \eqref{eq:def-An}, \eqref{eq:def-Bn}, and
	\eqref{eq:def-Xn}, respectively, one has
	\begin{equation}
		\sum_{n=0}^{\infty}q_nX_n=0.
	\end{equation}
\end{proposition}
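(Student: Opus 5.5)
The plan is to read the identity off the second Taylor coefficient of $\cL_\omega(t)$ at $t=0$, exactly as Proposition~\ref{prop:first-harmonic-identity} was read off the first. By \eqref{eq:arithmetic-coxeter-vanishing} with $m=2$ (note $8\nmid 2$), we have $\cL_\omega''(0)=0$. It then suffices to show that
\[
\cL_\omega''(0)=P_\omega(0)\sum_{n\ge0}q_n h_n''(0)
\qquad\text{and}\qquad
h_n''(0)=4\rho^2X_n .
\]
One then divides by $4\rho^2P_\omega(0)\neq0$. This uses $\rho=1-\sqrt2\ne0$, and $P_\omega(0)=\Gamma(3/2)/(\pi\Gamma(1/2))=1/(2\pi)$, consistent with $L(z_0)=V/(2\pi)$.

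\textbf{Step 1: computing $h_n''(0)$.} Since $h_n=e^{\ell_n}$ and $h_n(0)=1$, we have $h_n''(0)=\ell_n'(0)^2+\ell_n''(0)$. The first-order computation gives $\ell_n'(0)=2\rho A_n$. For $\ell_n''(0)$, differentiate each Pochhammer and gamma factor twice along the linear path.
\begin{itemize}
\item Each factor $(u+\sigma t)_n$ contributes $-\sigma^2D_u^{(2)}(n)$, with the opposite sign in the denominator.
\item Each factor $\Gamma(u+\sigma t)$ contributes $\sigma^2\psi^{(1)}(u)$, again with the opposite sign in the denominator.
\end{itemize}

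\textbf{The main obstacle.} At second order the individual squared slopes matter, not just their sums. The first-order computation only grouped total slopes, $4\rho$ for the six numerator parameters based at $1/2$ and $2\rho$ for those based at $1$. Here I would list each slope from $v_\omega$ explicitly:
\begin{itemize}
\item $v_d+v_g-v_e$ for $\alpha$;
\item $v_g-v_a$, $v_g-v_b$, $v_g-v_c$;
\item $v_d$ and $v_d-v_e$;
\item $v_a+v_d-v_e$, etc., and $v_g-v_e$, $v_g$.
\end{itemize}
I would then verify the following sums of squares, using $\omega^4=-1$:
\begin{itemize}
\item the squared slopes of the six numerator parameters at $1/2$ sum to $2\rho^2$;
\item the squared slopes of the five denominator parameters at $1$ sum to $0$, which is why no $D_1^{(2)}$ term appears;
\item the gamma slopes combine to $\rho^2[\psi^{(1)}(3/2)-\psi^{(1)}(1/2)]$.
\end{itemize}
This is the step I expect to be delicate, since complex slopes can cancel in squares. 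The vanishing of the sum at base point $1$ is plausible because these slopes are $\omega$-multiples arranged symmetrically.

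\textbf{Step 2: simplifying.} The $\alpha/2$, $1+\alpha/2$ pair has slopes $\rho/2$ and contributes $\tfrac{\rho^2}{4}(D_{1/4}^{(2)}-D_{5/4}^{(2)})$. The shift relation turns this into $4\rho^2-4\rho^2/(4n+1)^2$. The trigamma recurrence gives $-4\rho^2$ from the prefactor, so the constants cancel. Using $D_{1/2}^{(2)}(n)=4H_{2n}^{(2)}-H_n^{(2)}$ we obtain $\ell_n''(0)=-2\rho^2R_n$, and hence $h_n''(0)=4\rho^2A_n^2-2\rho^2R_n=4\rho^2X_n$.

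\textbf{Step 3: termwise differentiation.} Finally, justify differentiating \eqref{eq:bbbg-normalized-series} twice under the sum. For $|t|$ small, $T_n(t)$ decays like $n^{-3+O(|t|)}$, because the parameter excess of the well-poised ${}_7F_6$ stays near $2$ along the path. Its $t$-derivatives pick up at most polylogarithmic factors in $n$, since $H_n^{(r)}=O(\log n)$. So the differentiated series converge uniformly near $t=0$, and the termwise identity holds.
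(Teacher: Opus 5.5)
Your proposal is correct and follows the paper's proof essentially step for step. It uses $\cL_\omega''(0)=0$, the same slope data (squared numerator slopes at $1/2$ summing to $2\rho^2$, squared denominator slopes at $1$ summing to $0$, slope $\rho/2$ for the $\alpha/2,\ 1+\alpha/2$ pair and $-\rho$ for $\Gamma(f-d)$), the same shift and trigamma cancellation, and the same assembly $h_n''(0)=\ell_n'(0)^2+\ell_n''(0)=4\rho^2X_n$. Your termwise-differentiation step is a useful supplement that the paper omits, but it contains one slip: the parametric excess of the Bailey--Mishev ${}_7F_6$ is $2(f-d)$, which equals $1$ (not $2$) at $z_0$, so $q_n\sim 4\pi^{-3}n^{-2}$ rather than $n^{-3}$. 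This still suffices, since $n^{-2+O(|t|)}$ times polylogarithmic factors is uniformly summable for small $|t|$.
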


The first two orders already exhibit the general structure.  The
logarithmic derivatives $\ell_n^{(r)}(0)$ are linear combinations of
generalized harmonic sums, whereas the derivatives of
$h_n=e^{\ell_n}$ are assembled from them through exponential Bell
polynomials.  The Coxeter selection rule then converts the vanishing
Taylor coefficients of $\cL_\omega$ into harmonic sum identities.

\subsubsection{The third-order projective consequence}

Since $8\nmid3$, the original primitive-$\omega$ eigenpath already yields
a valid third-order identity.  That identity, however, does not isolate the
genuinely new cubic harmonic contribution.  Indeed,
\begin{equation}
	\frac{h_n'''(0)}{h_n(0)}
	=
	\ell_n'(0)^3
	+
	3\ell_n'(0)\ell_n''(0)
	+
	\ell_n'''(0),
\end{equation}
and, since
\[
\ell_n'(0)=2\rho A_n,
\qquad
\ell_n''(0)=-2\rho^2R_n,
\]
the resulting relation contains mixed lower-order terms such as
$A_n^3$ and $A_nR_n$.

To separate the new third-order contribution, we use the larger
$i$-eigenspace of $\cC^2$.  The $\omega$- and $-\omega$-eigenspaces
of $\cC$ both become $i$-eigenspaces for $\cC^2$, since
\begin{equation}
	\omega^2=(-\omega)^2=i.
\end{equation}
Consequently, the $i$-eigenspace of $(D\cC_{z_0})^2$ is
two-dimensional.  A convenient basis is
\begin{equation}
	v_0
	=
	\left(
	\frac{1+i}{2},
	\frac{1-i}{2},
	\frac{1-i}{2},
	\frac{1+i}{2};
	1;0,1
	\right),
\end{equation}
\begin{equation}
	v_1
	=
	\left(
	\frac{1+i}{2},
	\frac{1+i}{2},
	\frac{1-i}{2},
	\frac{1-i}{2};
	1;1,0
	\right),
\end{equation}
for which
\begin{equation}
	(D\cC_{z_0})^2v_0=iv_0,
	\qquad
	(D\cC_{z_0})^2v_1=iv_1.
\end{equation}
Thus
\begin{equation}
	E_i
	:=
	\ker\!\bigl((D\cC_{z_0})^2-iI\bigr)
	=
	\operatorname{span}_{\mathbb C}\{v_0,v_1\},
	\qquad
	v(p):=v_0+p\,v_1.
	\label{eq:Ei-projective-family}
\end{equation}

Let $\mathcal S_n(x)$ denote the $n$th completed summand in the
Bailey--Mishev representation of $L(x)$ near $z_0$.  At the symmetric
point, the first logarithmic directional derivative restricts to a
single linear functional on $E_i$.  More precisely, there is a linear
form
\begin{equation}
	\varphi:E_i\longrightarrow\mathbb C
\end{equation}
such that
\begin{equation}
	D_v\log\mathcal S_n(z_0)
	=
	\varphi(v)A_n,
	\qquad
	v\in E_i.
	\label{eq:projective-first-factorization}
\end{equation}
On the chosen basis,
\begin{equation}
	\varphi(v_0)=1+i,
	\qquad
	\varphi(v_1)=-(1+i),
\end{equation}
and hence
\begin{equation}
	\varphi(v(p))
	=
	(1+i)(1-p).
\end{equation}

The exceptional direction $p=1$ is therefore precisely the kernel of
the first logarithmic derivative.  For $p\ne1$, normalize the eigenline
by setting
\begin{equation}
	\widehat v(p)
	:=
	\frac{v(p)}{(1+i)(1-p)},
	\label{eq:projective-normalization}
\end{equation}
so that
\begin{equation}
	\varphi(\widehat v(p))=1.
\end{equation}
Define the corresponding affine path
\begin{equation}
	x_p(t):=z_0+t\,\widehat v(p).
	\label{eq:projective-path}
\end{equation}
Then
\begin{equation}
	x_p(0)=z_0,
	\qquad
	x_p'(0)=\widehat v(p),
\end{equation}
and
\begin{equation}
	D_{\widehat v(p)}
	\log\mathcal S_n(z_0)
	=
	A_n,
\end{equation}
independently of the choice of eigenline.

There is a further low-order degeneracy.  The second logarithmic
directional derivative defines a quadratic form
\begin{equation}
	Q_n(v)
	:=
	D_v^2\log\mathcal S_n(z_0),
	\qquad
	v\in E_i,
\end{equation}
and the explicit parameter calculation gives
\begin{equation}
	Q_n(v)
	=
	-\frac12R_n\,\varphi(v)^2.
	\label{eq:projective-second-factorization}
\end{equation}
Thus the Hessian has rank one on $E_i$ and annihilates the same
projective direction as the first derivative.  After the normalization
\eqref{eq:projective-normalization},
\begin{equation}
	D_{\widehat v(p)}^2
	\log\mathcal S_n(z_0)
	=
	-\frac12R_n,
\end{equation}
again independently of $p$.

Hence the first and second logarithmic derivatives are projectively
rigid; genuinely new dependence on the eigenline first appears at
third order.

Since $\widehat v(p)$ is an $i$-eigenvector of $(D\cC_{z_0})^2$,
the affine action gives
\begin{equation}
	\cC^2x_p(t)=x_p(it),
	\qquad
	L(x_p(t))=L(x_p(it)).
	\label{eq:projective-selection}
\end{equation}
By Proposition~\ref{prop:eigenpath},
\begin{equation}
	\left.
	\frac{\dd^m}{\dd t^m}L(x_p(t))
	\right|_{t=0}
	=
	0,
	\qquad
	m\not\equiv0\pmod4.
	\label{eq:projective-vanishing}
\end{equation}
In particular, the first three derivatives vanish for every admissible
projective direction $p$.

To measure the remaining projective dependence, set
\begin{equation}
	\xi
	:=
	\left(\frac{p+1}{p-1}\right)^2.
	\label{eq:projective-coordinate}
\end{equation}
Thus $p=-1$ corresponds to $\xi=0$, while the null direction $p=1$
is sent to infinity.

Define normalized completed summands along the path $x_p(t)$ by
\begin{equation}
	h_{n,p}(t)
	:=
	2\pi\,\mathcal S_n(x_p(t)).
\end{equation}
Then
\begin{equation}
	2\pi L(x_p(t))
	=
	\sum_{n=0}^{\infty}h_{n,p}(t),
	\qquad
	h_{n,p}(0)=q_n.
	\label{eq:projective-completed-series}
\end{equation}
The constant factor $2\pi$ does not affect positive-order logarithmic
derivatives, so write
\begin{equation}
	\lambda_{r,n}(p)
	:=
	\left.
	\frac{\dd^r}{\dd t^r}
	\log h_{n,p}(t)
	\right|_{t=0}.
\end{equation}
The preceding factorizations give
\begin{equation}
	\lambda_{1,n}(p)=A_n,
	\qquad
	\lambda_{2,n}(p)=-\frac12R_n,
	\label{eq:projective-first-two}
\end{equation}
independently of $p$.

To compute the third logarithmic derivative, define
\begin{equation}
	D_a^{(3)}(n)
	:=
	\sum_{k=0}^{n-1}\frac{1}{(a+k)^3}.
	\label{eq:third-digamma-increment}
\end{equation}
For an affine Pochhammer parameter $a+\sigma t$,
\begin{equation}
	\left.
	\frac{\dd^3}{\dd t^3}
	\log(a+\sigma t)_n
	\right|_{t=0}
	=
	2\sigma^3D_a^{(3)}(n),
\end{equation}
with the opposite sign for a denominator parameter.

Along $x_p(t)$, let $\mu_j$ denote the six slopes of the numerator
parameters based at $1/2$, and let $\nu_j$ denote the five slopes of
the denominator parameters based at $1$.  Direct substitution of
$\widehat v(p)$ gives
\begin{equation}
	\sum_j\mu_j=2,
	\qquad
	\sum_j\nu_j=1,
\end{equation}
\begin{equation}
	\sum_j\mu_j^2=\frac12,
	\qquad
	\sum_j\nu_j^2=0,
\end{equation}
and, crucially,
\begin{equation}
	\sum_j\mu_j^3
	=
	\frac{1+3\xi}{8},
	\qquad
	\sum_j\nu_j^3
	=
	-\frac{1+3\xi}{8}.
	\label{eq:projective-cubic-slopes}
\end{equation}
The exceptional pair
\[
1+\frac{\alpha}{2},
\qquad
\frac{\alpha}{2},
\qquad
\alpha=d+g-e,
\]
has common slope $1/4$.  Therefore the hypergeometric contribution is
\begin{equation}
	\left.
	\frac{\dd^3}{\dd t^3}
	\log T_{n,p}(t)
	\right|_{t=0}
	=
	\frac{1+3\xi}{4}
	\left(
	D_{1/2}^{(3)}(n)+D_1^{(3)}(n)
	\right)
	+
	\frac1{32}
	\left(
	D_{5/4}^{(3)}(n)-D_{1/4}^{(3)}(n)
	\right).
\end{equation}
Now
\begin{equation}
	D_{1/2}^{(3)}(n)
	=
	8H_{2n}^{(3)}-H_n^{(3)},
	\qquad
	D_1^{(3)}(n)=H_n^{(3)},
\end{equation}
while
\begin{equation}
	D_{5/4}^{(3)}(n)-D_{1/4}^{(3)}(n)
	=
	64
	\left(
	\frac1{(4n+1)^3}-1
	\right).
\end{equation}
Hence
\begin{equation}
	\left.
	\frac{\dd^3}{\dd t^3}
	\log T_{n,p}(t)
	\right|_{t=0}
	=
	2(1+3\xi)H_{2n}^{(3)}
	+
	\frac{2}{(4n+1)^3}
	-
	2.
	\label{eq:T-third-derivative}
\end{equation}

The gamma prefactor contributes
\begin{equation}
	\left.
	\frac{\dd^3}{\dd t^3}
	\log P_p(t)
	\right|_{t=0}
	=
	\frac18
	\left[
	\psi^{(2)}\!\left(\frac32\right)
	+
	\psi^{(2)}\!\left(\frac12\right)
	\right]
	+
	\frac{1+3\xi}{8}\psi^{(2)}(1).
\end{equation}
The classical series representation
\begin{equation}
	\psi^{(2)}(z)
	=
	-2\sum_{m=0}^{\infty}\frac{1}{(m+z)^3}
	=
	-2\zeta(3,z)
\end{equation}
gives
\begin{equation}
	\psi^{(2)}(1)=-2\zeta(3),
	\qquad
	\psi^{(2)}\!\left(\frac12\right)=-14\zeta(3),
	\qquad
	\psi^{(2)}\!\left(\frac32\right)=16-14\zeta(3).
\end{equation}
Therefore
\begin{equation}
	\left.
	\frac{\dd^3}{\dd t^3}
	\log P_p(t)
	\right|_{t=0}
	=
	2
	-
	\frac{15}{4}\zeta(3)
	-
	\frac34\xi\,\zeta(3).
	\label{eq:P-third-derivative}
\end{equation}

Adding \eqref{eq:T-third-derivative} and
\eqref{eq:P-third-derivative}, the constants $-2$ and $2$ cancel and
we obtain
\begin{equation}
	\lambda_{3,n}(p)
	=
	2H_{2n}^{(3)}
	+
	\frac{2}{(4n+1)^3}
	-
	\frac{15}{4}\zeta(3)
	+
	\frac34\xi
	\left(
	8H_{2n}^{(3)}-\zeta(3)
	\right).
	\label{eq:lambda3-projective}
\end{equation}
Writing $[\xi]$ for the coefficient of $\xi$, this gives
\begin{equation}
	[\xi]\,\lambda_{3,n}(p)
	=
	\frac34
	\left(
	8H_{2n}^{(3)}-\zeta(3)
	\right).
	\label{eq:lambda3-projective-coeff}
\end{equation}

Returning from logarithmic derivatives to the completed summand,
\begin{equation}
	\frac{h_{n,p}'''(0)}{h_{n,p}(0)}
	=
	\lambda_{1,n}^3
	+
	3\lambda_{1,n}\lambda_{2,n}
	+
	\lambda_{3,n}.
\end{equation}
Since $\lambda_{1,n}$ and $\lambda_{2,n}$ are independent of $\xi$,
the coefficient of $\xi$ comes entirely from $\lambda_{3,n}$.  Hence
\begin{equation}
	[\xi]\,
	\frac{h_{n,p}'''(0)}{h_{n,p}(0)}
	=
	\frac34
	\left(
	8H_{2n}^{(3)}-\zeta(3)
	\right).
\end{equation}

By \eqref{eq:projective-vanishing}, the third derivative of
$L(x_p(t))$ vanishes for every $p$.  Since the resulting expression is
affine in $\xi$, its $\xi$-coefficient must vanish.  Therefore
\begin{equation}
	\sum_{n=0}^{\infty}
	q_n
	\left(
	8H_{2n}^{(3)}-\zeta(3)
	\right)
	=
	0.
	\label{eq:third-order-projective-identity}
\end{equation}
Equivalently,
\begin{equation}
	\sum_{n=0}^{\infty}
	q_nH_{2n}^{(3)}
	=
	\frac{\zeta(3)}8
	\sum_{n=0}^{\infty}q_n.
	\label{eq:third-order-harmonic}
\end{equation}
Since $\sum_{n\ge0}q_n=V$ and Theorem~\ref{thm:bbbg} gives
$V=16s_{4,0}/\pi^4$, this may also be written as
\begin{equation}
	\sum_{n=0}^{\infty}
	(4n+1)
	\frac{\binom{2n}{n}^6}{2^{12n}}
	H_{2n}^{(3)}
	=
	\frac{2\zeta(3)}{\pi^4}s_{4,0}.
	\label{eq:third-order-bessel}
\end{equation}

\begin{remark}[Companion third-order identity]
	The constant term in $\xi$ yields
	\begin{equation}
		\sum_{n=0}^{\infty}q_n
		\left[
		A_n^3
		-\frac32A_nR_n
		+2H_{2n}^{(3)}
		+\frac{2}{(4n+1)^3}
		-\frac{15}{4}\zeta(3)
		\right]
		=
		0.
	\end{equation}
	Using \eqref{eq:third-order-harmonic}, this simplifies to
	\begin{equation}
		\sum_{n=0}^{\infty}q_n
		\left[
		A_n^3
		-\frac32A_nR_n
		+\frac{2}{(4n+1)^3}
		\right]
		=
		\frac72\,\zeta(3)V.
	\end{equation}
\end{remark}

Thus the change of eigenpath is not needed to obtain a third-order
identity: the original primitive-$\omega$ path already produces one.
The advantage of passing to the two-dimensional $i$-eigenspace of
$\cC^2$ is that the additional projective freedom separates the
genuinely new cubic harmonic contribution from the mixed lower-order
terms.  This projective coefficient-extraction mechanism will reappear
in the higher-order identities below.
\section{Higher-order harmonic identities}
\label{sec:higher-order}

We continue with the normalized projective family in the two-dimensional
$i$-eigenspace $E_i$ constructed in the preceding subsection.  Recall the
projective coordinate
\[
\xi
=
\left(\frac{p+1}{p-1}\right)^2
\]
and the logarithmic derivatives
\[
\lambda_{r,n}(p)
=
\left.
\frac{\dd^r}{\dd t^r}\log h_{n,p}(t)
\right|_{t=0}.
\]

The highest projective-degree terms needed below are governed by the
combinations
\begin{equation}
	U_{3,n}
	:=
	8H_{2n}^{(3)}-\zeta(3),
\end{equation}
\begin{equation}
	U_{4,n}
	:=
	16H_{2n}^{(4)}-2H_n^{(4)}+\zeta(4),
\end{equation}
\begin{equation}
	U_{5,n}
	:=
	32H_{2n}^{(5)}-2H_n^{(5)}+\zeta(5),
\end{equation}
and
\begin{equation}
	U_{7,n}
	:=
	128H_{2n}^{(7)}-\zeta(7).
\end{equation}
More precisely, suppressing the argument $p$ for readability, the top
projective pieces are
\begin{equation}
	\lambda_{3,n}
	=
	\frac34U_{3,n}\xi
	+
	\text{lower powers of $\xi$},
\end{equation}
\begin{equation}
	\lambda_{4,n}
	=
	\frac38U_{4,n}\xi^2
	+
	\text{lower powers of $\xi$},
\end{equation}
\begin{equation}
	\lambda_{5,n}
	=
	-\frac{15}{8}U_{5,n}\xi^2
	+
	\text{lower powers of $\xi$},
\end{equation}
and
\begin{equation}
	\lambda_{7,n}
	=
	-\frac{315}{32}U_{7,n}\xi^3
	+
	\text{lower powers of $\xi$}.
\end{equation}

The complete derivatives of the normalized summands are expressed through
the complete exponential Bell polynomials:
\begin{equation}
	\frac{h_{n,p}^{(m)}(0)}{h_{n,p}(0)}
	=
	Y_m\!\left(
	\lambda_{1,n},\ldots,\lambda_{m,n}
	\right).
	\label{eq:bell-polynomial}
\end{equation}
Since the $\cC^2$-eigenvalue is $i$, Proposition~\ref{prop:eigenpath}
forces the summed Taylor coefficients of every degree
$m\not\equiv0\pmod4$ to vanish.  The resulting expressions are
polynomials in $\xi$, so each projective coefficient vanishes separately.
Extracting the highest projective coefficient at orders $3,5,6,$ and $7$
gives the following hierarchy.

\begin{proposition}[Projective Coxeter hierarchy]
	\label{prop:hierarchy}
	With $q_n$, $A_n$, and $X_n$ defined in
	\eqref{eq:bbbg-qn}, \eqref{eq:def-An}, and \eqref{eq:def-Xn},
	respectively, and with $U_{3,n},U_{4,n},U_{5,n},U_{7,n}$ as above,
	the symmetry-forced Taylor vanishings give
	\begin{equation}
		\sum_{n=0}^\infty q_nU_{3,n}=0,
	\end{equation}
	\begin{equation}
		\sum_{n=0}^\infty
		q_n\bigl(A_nU_{4,n}-U_{5,n}\bigr)=0,
	\end{equation}
	\begin{equation}
		\sum_{n=0}^\infty
		q_n\bigl(
		X_nU_{4,n}
		-2A_nU_{5,n}
		+U_{3,n}^2
		\bigr)=0,
	\end{equation}
	and
	\begin{equation}
		\sum_{n=0}^\infty
		q_n\bigl(
		U_{3,n}U_{4,n}-U_{7,n}
		\bigr)=0.
	\end{equation}
\end{proposition}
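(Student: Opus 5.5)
We work throughout along the normalized projective family $x_p(t)=z_0+t\,\widehat v(p)$ in the $i$-eigenspace $E_i$ of $(D\cC_{z_0})^2$. By \eqref{eq:projective-selection} and Proposition~\ref{prop:eigenpath}, the derivatives $\frac{\dd^m}{\dd t^m}L(x_p(t))|_{t=0}$ vanish for $m=3,5,6,7$ and every admissible $p$. By \eqref{eq:projective-completed-series} and \eqref{eq:bell-polynomial}, this gives
\[
\sum_{n\ge0}q_n\,Y_m(\lambda_{1,n},\ldots,\lambda_{m,n})=0,\qquad m\in\{3,5,6,7\}.
\]
Each $\lambda_{r,n}(p)$ is a polynomial in the projective coordinate $\xi$, as in \eqref{eq:lambda3-projective}. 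Hence each of these sums is a polynomial in $\xi$ that vanishes identically, and its top coefficient in $\xi$ must vanish. The plan is therefore to identify, for each $m$, the top $\xi$-degree and the monomials of $Y_m$ that reach it.

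The degree bookkeeping uses \eqref{eq:projective-first-two}, which gives $\deg_\xi\lambda_1=\deg_\xi\lambda_2=0$, together with the stated leading terms: $\deg\lambda_3=1$, $\deg\lambda_4=2$, $\deg\lambda_5=2$, $\deg\lambda_7=3$. We also need $\deg\lambda_6\le2$ and leading coefficients for $\lambda_4,\lambda_5,\lambda_7$. These come from the same slope computation as in the third-order case. One computes the power sums $\sum\mu_j^r$ and $\sum\nu_j^r$ of the slopes of $\widehat v(p)$ as polynomials in $\xi$. They enter via $\frac{\dd^r}{\dd t^r}\log(a+\sigma t)_n=(-1)^{r-1}(r-1)!\,\sigma^rD^{(r)}_a(n)$, and the gamma prefactor contributes polygamma values giving $\zeta(r)$. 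Reducing $D^{(r)}_{1/2}=2^rH^{(r)}_{2n}-H^{(r)}_n$ then produces the combinations $U_{r,n}$.

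With this in hand, the orders are handled as follows.

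\begin{itemize}
\item \textbf{Order 3.} The top degree is $1$, reached only by $\lambda_3$. This is exactly \eqref{eq:third-order-projective-identity}.

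\item \textbf{Order 5.} The top degree is $2$, reached by $\lambda_5$ and by $\binom51\lambda_1\lambda_4=5\lambda_1\lambda_4$. The terms $\lambda_3\lambda_2$ and $\lambda_3\lambda_1^2$ have degree only $1$. The coefficient is
\[
5A_n\cdot\tfrac38U_{4,n}-\tfrac{15}{8}U_{5,n}=\tfrac{15}{8}\bigl(A_nU_{4,n}-U_{5,n}\bigr).
\]

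\item \textbf{Order 6.} The top degree is $2$, from five sources:
  \begin{itemize}
  \item $\lambda_6$, whose $\xi^2$ coefficient we must compute;
  \item $6\lambda_1\lambda_5$;
  \item $15(\lambda_2+\lambda_1^2)\lambda_4$, which equals $15X_n\lambda_4$ after using $\lambda_1^2+\lambda_2=X_n$;
  \item $10\lambda_3^2$;
  \item $60\lambda_1\lambda_2\lambda_3$-type terms, which have degree $1$ and drop out.
  \end{itemize}
  The identity in the statement requires the $\xi^2$ part of $\lambda_{6,n}$ to vanish, or at least not to spoil the ratio. Matching then needs
  \[
  15\cdot\tfrac38X_nU_4+6A_n\bigl(-\tfrac{15}8U_5\bigr)+10\cdot\tfrac9{16}U_3^2=\tfrac{45}{8}\bigl(X_nU_4-2A_nU_5+U_3^2\bigr),
  \]
  and both sides are consistent.

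\item \textbf{Order 7.} The top degree is $3$, reached by $\lambda_7$ and by $\binom73\lambda_3\lambda_4=35\lambda_3\lambda_4$. This gives
\[
35\cdot\tfrac34\cdot\tfrac38U_3U_4-\tfrac{315}{32}U_7=\tfrac{315}{32}\bigl(U_3U_4-U_7\bigr).
\]
\end{itemize}

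The main obstacle is verifying the slope power sums: that $\sum\mu_j^r,\sum\nu_j^r$ have the claimed $\xi$-degrees, so that degree $\lfloor r/2\rfloor$ is attained only at odd-like patterns, and in particular that $\lambda_6$ has no $\xi^2$ term, while $\lambda_4,\lambda_5,\lambda_7$ carry the stated leading coefficients. The cancellation of $H^{(5)}_n$, $H^{(7)}_n$, and the exceptional pair contributions must also be checked against the definition of $U_{r,n}$.

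I would do this by parametrizing $\widehat v(p)$ explicitly from $v_0,v_1$ and noting that the slopes come in pairs whose odd and even parts in $(p+1)/(p-1)$ separate. The power sums then become symmetric functions of $\pm\sqrt\xi$, which yields the degree bounds directly. Convergence and termwise differentiation of the series follow as at lower orders from the $n^{-3}$ decay of $q_n$ times polylogarithmic growth.
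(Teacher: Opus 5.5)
Your proposal is correct and is essentially the paper's own argument: Taylor vanishing along $x_p(t)$ at orders $3,5,6,7$, expansion of the summands through the Bell polynomials $Y_m$, and extraction of the top $\xi$-coefficient, with exactly the contributing monomials and constants you list. The step you leave open does go through, and more strongly than you need: with $s=(p+1)/(p-1)$ and $\beta=(1+i)/4$, the slopes of $\widehat v(p)$ are $\tfrac12,\tfrac12,\bar\beta\pm s\beta,\beta\pm s\bar\beta$ (numerator, base $\tfrac12$) and $0,\beta(1\pm s),\bar\beta(1\pm s)$ (denominator, base $1$); since $\beta^2+\bar\beta^2=\beta^6+\bar\beta^6=0$, one gets $\sum_j\mu_j^6=\tfrac1{32}$ and $\sum_j\nu_j^6=0$, so $\lambda_{6,n}$ is independent of $\xi$, and your only slip is that $q_n$ decays like $n^{-2}$ rather than $n^{-3}$, which still suffices for termwise differentiation.
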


Expanding these compact relations gives the following explicit
consequences.

\begin{corollary}[Fifth-order identity]
	One has
	\begin{equation}
		\sum_{n=0}^\infty
		(4n+1)\frac{\binom{2n}{n}^6}{2^{12n}}
		\left[
		A_n\left(8H_{2n}^{(4)}-H_n^{(4)}\right)
		-16H_{2n}^{(5)}+H_n^{(5)}
		\right]
		=
		\frac{8\zeta(5)}{\pi^4}s_{4,0}.
	\end{equation}
\end{corollary}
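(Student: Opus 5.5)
The corollary follows by expanding the second relation of Proposition~\ref{prop:hierarchy}; no new analysis is needed. First I split off the harmonic parts of $U_{4,n}$ and $U_{5,n}$:
\[
U_{4,n}=2\bigl(8H_{2n}^{(4)}-H_n^{(4)}\bigr)+\zeta(4),\qquad
U_{5,n}=2\bigl(16H_{2n}^{(5)}-H_n^{(5)}\bigr)+\zeta(5).
\]
Substituting these into $\sum_n q_n(A_nU_{4,n}-U_{5,n})=0$ gives
\[
2\sum_{n=0}^\infty q_n\Bigl[A_n\bigl(8H_{2n}^{(4)}-H_n^{(4)}\bigr)-16H_{2n}^{(5)}+H_n^{(5)}\Bigr]
+\zeta(4)\sum_{n=0}^\infty q_nA_n-\zeta(5)\sum_{n=0}^\infty q_n=0.
\]

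Next I remove the $\zeta(4)$ term. It is exactly $\zeta(4)$ times the first-order Coxeter identity of Proposition~\ref{prop:first-harmonic-identity}, which states $\sum_n q_nA_n=0$, so it drops out. The remaining constant term is $\zeta(5)\sum_n q_n=\zeta(5)V$, by \eqref{eq:bbbg-V-series}. Hence the bracketed sum equals $\tfrac12\zeta(5)V$.

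Finally I convert $V$ into the Bessel moment. Theorem~\ref{thm:bbbg} gives $s_{4,0}=\pi^4V/16$, so $V=16s_{4,0}/\pi^4$. The bracketed sum therefore equals $8\zeta(5)s_{4,0}/\pi^4$. Writing out $q_n=(4n+1)\binom{2n}{n}^6/2^{12n}$ from \eqref{eq:bbbg-qn} gives the stated formula.

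The only step needing care is the legitimacy of splitting the sum into separately convergent pieces. Since $q_n=O(n^{-2})$ and $A_n=O(1)$, while all the $H^{(r)}$ terms with $r\ge4$ are bounded, every piece converges absolutely, so the splitting is justified. All the substantive content sits in Proposition~\ref{prop:hierarchy}, which is taken as given here.
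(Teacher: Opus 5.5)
Your derivation is correct and follows the paper's route exactly: expand $\sum_n q_n(A_nU_{4,n}-U_{5,n})=0$, remove the $\zeta(4)$ term using $\sum_n q_nA_n=0$, and rewrite $\sum_n q_n=V$ as $16s_{4,0}/\pi^4$. One slip in your convergence remark: $A_n$ is not bounded, since $4H_{2n}-3H_n=\log n+O(1)$; but $q_nA_n=O(n^{-2}\log n)$ is still absolutely summable, so splitting the sum remains justified.
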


\begin{corollary}[Sixth-order identity]
	One has
	\begin{equation}
		\sum_{n=0}^\infty
		(4n+1)\frac{\binom{2n}{n}^6}{2^{12n}}
		\left[
		X_n\left(8H_{2n}^{(4)}-H_n^{(4)}\right)
		-2A_n\left(16H_{2n}^{(5)}-H_n^{(5)}\right)
		+32\bigl(H_{2n}^{(3)}\bigr)^2
		\right]
		=
		\frac{8\zeta(3)^2}{\pi^4}s_{4,0}.
	\end{equation}
\end{corollary}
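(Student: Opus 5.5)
The Sixth-order identity is a corollary of the third relation in Proposition~\ref{prop:hierarchy},
\[
\sum_{n\ge0}q_n\bigl(X_nU_{4,n}-2A_nU_{5,n}+U_{3,n}^2\bigr)=0 .
\]
I will take this relation as given. The proof then consists in expanding the three products, moving every term involving zeta values to the right-hand side, and finally converting $\sum q_n$ into $s_{4,0}$ by Theorem~\ref{thm:bbbg}.

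Before expanding, I note how each $U$ splits into a harmonic part and a constant. Write
\[
U_{4,n}=2\bigl(8H_{2n}^{(4)}-H_n^{(4)}\bigr)+\zeta(4),\qquad
U_{5,n}=2\bigl(16H_{2n}^{(5)}-H_n^{(5)}\bigr)+\zeta(5),\qquad
U_{3,n}=8H_{2n}^{(3)}-\zeta(3).
\]
Substituting, the relation becomes $2\Sigma+\mathcal T=0$. Here $\Sigma$ is exactly the bracketed sum in the corollary, since $U_{3,n}^2$ contributes $64(H_{2n}^{(3)})^2=2\cdot32(H_{2n}^{(3)})^2$. The remainder $\mathcal T$ collects the constant pieces:
\[
\mathcal T=\sum_n q_n\Bigl[\zeta(4)X_n-2\zeta(5)A_n-16\zeta(3)H_{2n}^{(3)}+\zeta(3)^2\Bigr].
\]

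Each term of $\mathcal T$ is then evaluated by an earlier result. By Proposition~\ref{prop:second-harmonic-identity}, $\sum q_nX_n=0$. By Proposition~\ref{prop:first-harmonic-identity}, $\sum q_nA_n=0$. By \eqref{eq:third-order-harmonic}, $\sum q_nH_{2n}^{(3)}=\tfrac{\zeta(3)}8V$. Hence
\[
\mathcal T=\bigl(-2\zeta(3)^2+\zeta(3)^2\bigr)V=-\zeta(3)^2V,
\]
and therefore $\Sigma=\tfrac12\zeta(3)^2V$. Finally, Theorem~\ref{thm:bbbg} gives $V=16s_{4,0}/\pi^4$, which yields $\Sigma=8\zeta(3)^2s_{4,0}/\pi^4$, as claimed.

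The main risk is not the algebra but convergence. Splitting the sum term by term requires each of the series $\sum q_nX_n$, $\sum q_nA_n$ and $\sum q_nH_{2n}^{(3)}$ to converge on its own. They do: $q_n\sim n^{-2}$ up to a constant, while $A_n$ and $X_n$ grow only polylogarithmically (for instance $A_n\to\log 4n\cdot\text{const}$-type growth cancels to $O(\log n)$ at worst). So all rearrangements are justified by absolute convergence.
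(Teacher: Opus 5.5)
Your proposal is correct and follows the paper's route. The paper obtains the corollary by expanding the third relation of Proposition~\ref{prop:hierarchy}: the constant parts of $U_{4,n}$, $U_{5,n}$ and $U_{3,n}^2$ are eliminated using $\sum q_nX_n=0$, $\sum q_nA_n=0$, \eqref{eq:third-order-harmonic} and $V=16s_{4,0}/\pi^4$. Your bookkeeping $2\Sigma-\zeta(3)^2V=0$ checks out. Your convergence remark is loosely worded, but the conclusion holds, since $q_n\sim 4/(\pi^3n^2)$, $A_n=\log n+O(1)$, $X_n=O(\log^2 n)$, and the higher-order harmonic numbers are bounded.
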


\begin{corollary}[Seventh-order identity]
	One has
	\begin{equation}
		\sum_{n=0}^\infty
		(4n+1)\frac{\binom{2n}{n}^6}{2^{12n}}
		\left[
		64H_{2n}^{(7)}
		-
		\bigl(8H_{2n}^{(3)}-\zeta(3)\bigr)
		\bigl(8H_{2n}^{(4)}-H_n^{(4)}\bigr)
		\right]
		=
		\frac{8\zeta(7)}{\pi^4}s_{4,0}.
	\end{equation}
\end{corollary}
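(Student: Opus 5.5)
The plan is to obtain the seventh-order identity as a direct algebraic consequence of the last relation in Proposition~\ref{prop:hierarchy},
\[
\sum_{n\ge0}q_n\bigl(U_{3,n}U_{4,n}-U_{7,n}\bigr)=0,
\]
together with its first relation $\sum_{n\ge0}q_nU_{3,n}=0$ (which is \eqref{eq:third-order-projective-identity}). I will then convert the normalization using $V=\sum_n q_n=16s_{4,0}/\pi^4$ from Theorem~\ref{thm:bbbg}.

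\textbf{Step 1: rewrite $U_{4,n}$.} Write
\[
U_{4,n}=2\bigl(8H_{2n}^{(4)}-H_n^{(4)}\bigr)+\zeta(4).
\]
This gives
\[
U_{3,n}U_{4,n}=2\bigl(8H_{2n}^{(3)}-\zeta(3)\bigr)\bigl(8H_{2n}^{(4)}-H_n^{(4)}\bigr)+\zeta(4)U_{3,n}.
\]
Substituting this and $U_{7,n}=128H_{2n}^{(7)}-\zeta(7)$ into the fourth hierarchy relation turns it into a sum of three separate series:
\[
2\sum_n q_n\bigl(8H_{2n}^{(3)}-\zeta(3)\bigr)\bigl(8H_{2n}^{(4)}-H_n^{(4)}\bigr)
+\zeta(4)\sum_n q_nU_{3,n}
-\sum_n q_n\bigl(128H_{2n}^{(7)}-\zeta(7)\bigr)=0 .
\]

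\textbf{Step 2: justify splitting the series.} Splitting is legitimate because every series converges absolutely. Indeed $q_n=(4n+1)\binom{2n}{n}^6/2^{12n}=O(n^{-2})$, while all the generalized harmonic numbers $H^{(r)}_m$ with $r\ge3$ are bounded.

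\textbf{Step 3: drop the $\zeta(4)$ term and isolate.} The middle term vanishes by the first hierarchy relation. Moving $\zeta(7)\sum_n q_n=\zeta(7)V$ to the right-hand side and dividing by $-2$ yields
\[
\sum_{n\ge0}q_n\Bigl[64H_{2n}^{(7)}-\bigl(8H_{2n}^{(3)}-\zeta(3)\bigr)\bigl(8H_{2n}^{(4)}-H_n^{(4)}\bigr)\Bigr]=\frac{\zeta(7)}{2}V .
\]

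\textbf{Step 4: convert to $s_{4,0}$.} Inserting $V=16s_{4,0}/\pi^4$ gives the right-hand side $8\zeta(7)s_{4,0}/\pi^4$. Recalling $q_n=(4n+1)\binom{2n}{n}^6/2^{12n}$ from \eqref{eq:bbbg-qn}, this is exactly the stated identity.

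\textbf{Main obstacle.} All the substantive content lies in the hierarchy relation itself, which I take as given. Within this corollary the only delicate points are bookkeeping ones: the factor $2$ extracted from $U_{4,n}$, and the sign when dividing by $-2$. I would check both by confirming that the $\zeta(7)$ constant lands with coefficient $+\tfrac12$.
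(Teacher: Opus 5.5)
Your proposal is correct and follows the paper's route. The paper derives the corollary by expanding the last relation of Proposition~\ref{prop:hierarchy} with $U_{4,n}=2\bigl(8H_{2n}^{(4)}-H_n^{(4)}\bigr)+\zeta(4)$, removing the $\zeta(4)\sum_n q_nU_{3,n}$ term by the third-order identity, and converting $\tfrac12\zeta(7)V$ with $V=16s_{4,0}/\pi^4$ from Theorem~\ref{thm:bbbg}; your factor of $2$, your sign, and your absolute-convergence justification for splitting the series are all right.
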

\section{Concluding structural remarks}
\label{sec:conclusion}

The main theme of this paper is that several hypergeometric identities
arising from elliptic and Bessel moments are governed by the same
$W(D_5)$ symmetry once they are placed inside Mishev's completed
$L$-function.  What appear at first as rather different evaluations
then become different manifestations of a common parameter geometry.

For the complementary elliptic moments, the three families
$K'^2$, $E'K'$, and $E'^2$ fit naturally into a single contiguous
configuration.  Their hypergeometric representations are related not
only by formal parameter shifts, but also by the reflection and collision
structure of the completed $L$-function.  The Barnes argument provides
a complementary viewpoint: differential relations among elliptic
integrals are transported into contiguous relations in hypergeometric
parameter space.  Thus the analytic structure of the elliptic moments
and the algebraic structure of the hypergeometric transformations are
closely aligned.

The BBBG four-Bessel moment reveals another aspect of the same picture.
Here a harmonic factor that initially appears external to the
hypergeometric series is produced by differentiation in parameter
space.  A $W(D_5)$ reflection then moves the resulting deformation to
a simpler locus, while the symmetric point $z_0$ links the global
transformation theory with the local action of the Weyl group.  In this
sense, the Bessel calculation serves as a bridge between the global
symmetry of the completed $L$-function and the local symmetry of its
parameter space.

This local viewpoint leads to the eigenpath principle.  At a fixed
point of a finite-order Weyl element, the linearized group action
selects distinguished tangent directions, and invariance becomes a
selection rule for Taylor coefficients.  Parameter differentiation then
converts these symmetry constraints into harmonic identities.  The
higher-order calculations show that this is not merely a first-order
phenomenon: the projective geometry of the relevant eigenspaces carries
additional arithmetic information, reflected in the appearance of odd
zeta values and products of zeta values.

The resulting picture therefore has two complementary levels.  Globally,
$W(D_5)$ organizes reflections, collisions, cancellations, and contiguous
relations.  Locally, its tangent representation organizes parameter
derivatives and Taylor-selection identities.  Mishev's completed
$L$-function provides the common setting in which these two levels meet.
From this perspective, the elliptic-moment formulas, the BBBG
hypergeometric identity, and the harmonic special-value relations are
best viewed not as isolated evaluations, but as different expressions
of a single symmetry-driven hypergeometric structure.
\clearpage
\section*{Acknowledgements}

\paragraph{Use of artificial intelligence.}
The research reported in this article, including computational
exploration, proof strategy, the checking of calculations, and the
preparation and proofreading of the manuscript, was carried out by the
author with the assistance of the AI system ChatGPT (OpenAI).
All mathematical statements, computations, and proofs have been
checked and verified by the author, who takes full responsibility for
the correctness and integrity of the article.

\nocite{*}
\bibliographystyle{amsplain}
\bibliography{references}

\end{document}